\definecolor{doom}{rgb}{0,0,0.68}
\newtheorem{theorem}{Theorem}
\newaliascnt{example}{theorem}
\newaliascnt{corollary}{theorem}
\newtheorem{corollary}[corollary]{Corollary}
\newaliascnt{lemma}{theorem}
\newtheorem{lemma}[lemma]{Lemma}
\newtheorem*{remark}{Remark}
\newtheorem*{example*}{Example}
\newtheorem*{conjecture}{Conjecture}
\newtheorem*{question}{Question}
\theoremstyle{definition}
\newaliascnt{definition}{theorem}
\newtheorem{definition}[definition]{Definition}
\newcommand{\axiom}[1]{\mathsf{#1}} 
\newcommand{\ZFC}{\axiom{ZFC}}
\newcommand{\NS}{\mathrm{NS}}
\newcommand{\Reg}{\mathrm{Reg}}
\DeclareMathOperator{\cf}{cf}
\DeclareMathOperator{\rng}{rng}
\DeclareMathOperator{\acc}{acc}
\DeclareMathOperator{\pcf}{pcf}
\newcommand{\sdiff}{\mathbin{\triangle}}
\newcommand{\forces}{\mathrel{\Vdash}}
\newcommand{\PP}{\mathbb{P}}
\newcommand{\BB}{\mathbb{B}}
\newcommand{\QQ}{\mathbb{Q}}
\newcommand{\RR}{\mathbb{R}}
\newcommand{\power}{\mathcal{P}}
\newcommand{\tup}[1]{\left\langle#1\right\rangle}
\newcommand{\midd}{\mathrel{}\middle|\mathrel{}}
\newcommand{\res}{{\upharpoonright}}
\newcommand{\E}{\mathrel{E}}
\newcommand{\Los}{{\L}o\'s\ }
\newcommand{\cC}{\mathcal C}
\author{Yair Hayut}
\address[Yair Hayut]{\textbf{Einstein Institute of Mathematics}\newline
Edmond J. Safra Campus,\newline
The Hebrew University of Jerusalem.\newline
Givat Ram, Jerusalem, 91904, Israel}
\email[Yair Hayut]{yair.hayut@mail.huji.ac.il}
\author{Asaf Karagila}
\address[Asaf Karagila]{\textbf{Einstein Institute of Mathematics}\newline
Edmond J. Safra Campus\newline
The Hebrew University of Jerusalem.\newline
Givat Ram, Jerusalem, 91904, Israel}
\email[Asaf Karagila]{karagila@math.huji.ac.il}
\urladdr[Asaf Karagila]{http://boolesrings.org/asafk}
\date{July 28, 2015}
\subjclass[2010]{Primary 03E35; Secondary 03E55}
\keywords{Large cardinals, forcing, Prikry type forcing}
\title{Restrictions on Forcings That Change Cofinalities}
\begin{document}
\begin{abstract}
In this paper we investigate some properties of forcing which can be considered ``nice'' in the context of singularizing regular cardinals to have an uncountable cofinality. We show that such forcing which changes cofinality of a regular cardinal, cannot be too nice and must cause some ``damage'' to the structure of cardinals and stationary sets. As a consequence there is no analogue to the Prikry forcing, in terms of ``nice'' properties, when changing cofinalities to be uncountable.
\end{abstract}
\maketitle

\section{Introduction}\label{Section:Intro}

In this paper we examine a few properties of forcing notions which limit their ability to change cofinalities of regular cardinals without collapsing them. This work is the result of trying to understand whether $\square(\kappa)$ can fail at the first inaccessible cardinal. One naive approach would be to start with a sufficiently rich core model, and singularize many cardinals while preserving the inaccessibility of $\kappa$, and without adding $\square(\kappa)$ sequences. The easiest way to do this would be to ensure the forcing is $\sigma$-closed.

When this approach failed, it soon became apparent that $\sigma$-closed forcings cannot change cofinalities without collapsing cardinals. The natural question that followed is how ``nice'' can be a forcing that changes the cofinality of $\kappa$ without collapsing it? For a countable cofinality we have the Prikry forcing which is weakly homogeneous, does not add bounded subsets to $\kappa$, and does not change the cofinality of any other regular cardinal. Can these properties be duplicated when the target cofinality is uncountable?

In \autoref{Section:Proper} we prove that a proper forcing, in particular a $\sigma$-closed forcing, cannot change cofinalities without collapsing cardinals. In \autoref{Section:PCF} we make some observations related to PCF theory that suggest that changing many cofinalities at once must come at the price of adding $\omega$-sequences. The \hyperref[Section:Reflective]{fourth section} is dedicated to investigating properties related to homogeneity. We show that under some relatively weak additional hypotheses, the existence of such forcing affects the cardinal structure of $V$; it implies that the cardinal that changes its cofinality to uncountable cofinality must be preceded by stationarily many cardinals that change their cofinalities as well. We also show that under some further natural assumptions this cardinal must be at least Mahlo of high order, thus revealing some of its ``largeness'' already in $V$. 

The conclusion we draw from the work presented here is that in order to change the cofinality of a regular cardinal to be uncountable, the forcing cannot have too many nice properties simultaneously. In particular there is no analogue of the Prikry forcing to this case, at least in terms of ``minimal damage'' to the structure of cardinals in the ground model.

Throughout this paper we work in $\ZFC$. Our notation is standard: for a forcing notion $\PP$ and two conditions $p, q \in\PP$, $q \leq p$ means that $q$ is stronger than $p$. All the definitions and properties of stationary sets, proper forcing and PCF, that are needed in this paper, are taken from Jech's ``Set Theory'' (\cite{Jech2003}).

\section{Proper Forcing}\label{Section:Proper}
\begin{theorem}\label{Theorem:proper}
Assume $\kappa$ is regular. Let $\PP$ be a forcing such that $\forces_\PP\cf(\check\kappa)=\check\mu>\check\omega$. Moreover assume there is a partition of $\kappa$ into stationary sets, $\{S_\alpha\mid\alpha<\kappa\}$ , all of which remain stationary after forcing which $\PP$. Then $\forces_\PP |\check\kappa|=|\check\mu|$.
\end{theorem}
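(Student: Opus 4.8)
The plan is to pass to a generic extension $V[G]$ and construct there an injection of $\kappa$ into a set of size $\mu$, using the partition $\langle S_\alpha\mid\alpha<\kappa\rangle$ as a coding device; since the construction is uniform in $G$, it yields the forcing statement $\forces_\PP|\check\kappa|=|\check\mu|$.

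First I would record two easy remarks about $V[G]$. Since $\cf^{V[G]}(\kappa)=\mu$, the ordinal $\mu$ is a regular cardinal of $V[G]$, and $\mu\le\kappa$ (the interesting case being $\mu<\kappa$, so that $\kappa$ becomes singular in $V[G]$). Secondly, again because $\cf^{V[G]}(\kappa)=\mu$ with $\mu$ regular, there is in $V[G]$ a club $C\subseteq\kappa$ with $\otp(C)=\mu$; in particular $|C|=\mu$ as computed in $V[G]$.

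The heart of the argument is then immediate. Fix such a $C$. For every $\alpha<\kappa$, the set $S_\alpha$ is by hypothesis stationary in $V[G]$, hence meets the club $C$; put $g(\alpha)=\min(S_\alpha\cap C)$. As $\langle S_\alpha\mid\alpha<\kappa\rangle$ is a partition, the $S_\alpha$ are pairwise disjoint, so $g$ is an injection of $\kappa$ into $C$, and $g\in V[G]$. Therefore, computing in $V[G]$, $|\kappa|\le|C|=\mu\le|\kappa|$, whence $|\kappa|=|\mu|$ in $V[G]$. Since $G$ was an arbitrary generic filter, this proves $\forces_\PP|\check\kappa|=|\check\mu|$.

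I do not expect a genuine obstacle: the mathematical content is simply the remark that a partition of $\kappa$ into $\kappa$ many stationary pieces, all of which survive the forcing, is exactly what is needed to spread $\kappa$ injectively across a club of size $\mu$. The only points demanding care are bookkeeping ones: that every cardinality computation is performed inside $V[G]$ (in particular that $\mu$ is still a cardinal there), and that $C$ is chosen with order type exactly $\mu$ rather than being an arbitrary club, which inside a regular $\kappa$ would have full size $\kappa$. One may also observe that the hypothesis $\mu>\omega$ is not used in this proof; for $\mu=\omega$ the conclusion would read $\forces_\PP|\check\kappa|=\aleph_0$, i.e.\ such a forcing must collapse $\kappa$, which is consistent with the fact that the stationary-preservation hypothesis cannot be met by Prikry-type forcing.
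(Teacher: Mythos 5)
Your proof is correct and is essentially the paper's argument: the paper takes a continuous cofinal $s\colon\mu\to\kappa$ and uses preserved stationarity to define a surjection $f\colon\mu\to\kappa$ by $f(\beta)=\alpha$ iff $s(\beta)\in S_\alpha$, which is just the inverse direction of your injection $g(\alpha)=\min(S_\alpha\cap C)$ into the club $C=\rng s$. The key point --- each $S_\alpha$ still meets the generic club, and disjointness makes the coding map well-defined --- is identical.
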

\begin{proof}
Fix in $V$ a partition of $\kappa$ as in the assumptions of the theorem. Let $G\subseteq\PP$ be a $V$-generic filter, and $s\in V[G]$ a continuous and cofinal function from $\mu$ to $\kappa$. We define $f\colon\mu\to\kappa$ by $f(\beta)=\alpha$ if and only if $s(\beta)\in S_\alpha$. Since $\rng s$ is a club in $\kappa$ and the stationarity of each $S_\alpha$ is preserved in $V[G]$, $\rng s\cap S_\alpha\neq\varnothing$ for all $\alpha<\kappa$, and therefore $f$ is onto $\kappa$.
\end{proof}

We remark that if $\PP$ is a forcing preserving the stationarity of all subsets of a stationary $S\subseteq\kappa$, then such partition exists trivially by partitioning $S$ and adding $\kappa\setminus S$ to one of the parts. Proper forcings preserve the stationarity of subset of $S^\kappa_\omega$, which is the key point in the proof of the following corollary.

\begin{corollary}\label{Corollary:Proper}
If $\PP$ is a proper forcing which changes the cofinality of a regular cardinal $\kappa$, then $\PP$ collapses $\kappa$ to its new cofinality.
\end{corollary}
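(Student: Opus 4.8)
The plan is to reduce \autoref{Corollary:Proper} to \autoref{Theorem:proper}, using the two facts recalled in the paragraph between them: that a proper forcing preserves the stationarity of every subset of $S^\kappa_\omega$, and that such a preservation property already supplies a partition as needed in \autoref{Theorem:proper} (partition $S^\kappa_\omega$ into $\kappa$ stationary pieces by Solovay's theorem and absorb $\kappa\setminus S^\kappa_\omega$ into one of them). So I would fix a $V$-generic $G\subseteq\PP$ with $\mu:=\cf^{V[G]}(\kappa)<\kappa$ and aim for $|\kappa|^{V[G]}=\mu$. First I would dispose of the trivial bounds on $\kappa$: $\cf(\omega)$ is never changed, and since $\PP$ is proper it preserves $\omega_1$ together with its stationary subsets and hence does not change $\cf(\omega_1)$; so $\kappa\geq\omega_2$, $S^\kappa_\omega$ is stationary in $\kappa$ in $V$, and by the recalled theorem $S^\kappa_\omega$ and all of its $V$-stationary subsets stay stationary in $V[G]$.

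The one genuine point, which I expect to be the main obstacle, is that $\mu>\omega$, so that \autoref{Theorem:proper} applies at all: a priori a proper forcing might singularize $\kappa$ to cofinality $\omega$. I would rule this out from the preservation property. If $\cf^{V[G]}(\kappa)=\omega$, fix in $V[G]$ an increasing cofinal $\omega$-sequence in $\kappa$ and replace each term by the successor ordinal immediately above it; the resulting set $C$ is still increasing and cofinal in $\kappa$, has no accumulation point below $\kappa$ (for $\alpha<\kappa$ the initial segment $C\cap\alpha$ is finite), hence is a club, and consists solely of successor ordinals. Since every member of $S^\kappa_\omega$ is a limit ordinal, $C\cap S^\kappa_\omega=\varnothing$, so $S^\kappa_\omega$ is non-stationary in $V[G]$ — contradicting its preservation. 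Hence $\omega<\mu<\kappa$.

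Finally, with $\mu$ uncountable, I would invoke the remark after \autoref{Theorem:proper} to obtain a partition $\{S_\alpha\mid\alpha<\kappa\}$ of $\kappa$ into sets that are stationary in $V$ and remain so in $V[G]$ — the pieces contained in $S^\kappa_\omega$ by the preservation theorem, and the one piece that swallowed $\kappa\setminus S^\kappa_\omega$ because it contains such a piece — and then run the (purely local) argument from the proof of \autoref{Theorem:proper} inside $V[G]$, with $s$ a continuous cofinal map $\mu\to\kappa$, to get a surjection of $\mu$ onto $\kappa$. This gives $|\kappa|^{V[G]}=\mu=\cf^{V[G]}(\kappa)$, as required. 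One small bookkeeping point: \autoref{Theorem:proper} is phrased with a forced value $\check\mu$ of the cofinality, while here $\mu$ may depend on $G$; this is harmless, since one may either pass to a condition of $G$ that decides $\cf(\check\kappa)$ and apply the theorem below it, or simply rerun its (local) proof in $V[G]$ as just indicated.
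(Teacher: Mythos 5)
Your proposal is correct and follows essentially the same route as the paper: rule out the countable target cofinality via the stationary-set-preservation property of proper forcings, then feed the Solovay partition of $S^\kappa_\omega$ (with $\kappa\setminus S^\kappa_\omega$ absorbed into one piece) into \autoref{Theorem:proper}. The only cosmetic difference is in the $\cf(\kappa)=\omega$ case, where you exhibit a club of successor ordinals in $\kappa$ killing the stationarity of $S^\kappa_\omega$, whereas the paper exhibits a club in $[\kappa]^\omega$ disjoint from $([\kappa]^\omega)^V$; both are valid instances of the same preservation principle.
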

\begin{proof}
If $\PP$ changes the cofinality of $\kappa$ to $\omega$, let $G$ be a generic filter. In $V[G]$ take $s$ a cofinal sequence in $\kappa$, of order type $\omega$. Then $[\kappa]^\omega$, $\{A\in[\kappa]^\omega\mid s\subseteq A\}$ is a club in $[\kappa]^\omega$. But this club is disjoint of every subset of $[\kappa]^\omega\cap V$. Therefore $\PP$ is not proper.

If $\PP$ is proper then it preserves the stationarity of subsets of $S^\kappa_\omega$. If $\PP$ changes the cofinality of $\kappa$ to be uncountable, then $\kappa$ is collapsed, as shown in the theorem above.
\end{proof}

\begin{corollary}
If $\PP$ is $\sigma$-closed and $\PP$ changes the cofinality of a regular cardinal to $\omega_1$, then it collapses it.\qed
\end{corollary}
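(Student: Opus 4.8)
The plan is to observe that this corollary is an immediate consequence of \autoref{Corollary:Proper}, because every $\sigma$-closed forcing is proper. Concretely, I would first recall the standard fact (see Jech, \cite{Jech2003}) that $\sigma$-closed forcing notions are proper: given a countable elementary submodel $M\prec H_\theta$ with $\PP\in M$ and a condition $p\in\PP\cap M$, one enumerates in order type $\omega$ the dense subsets of $\PP$ that belong to $M$, and uses $\sigma$-closure to build a decreasing $\omega$-chain below $p$ meeting each of them; any lower bound of this chain is $(M,\PP)$-generic and extends $p$. Hence $\PP$ satisfies the hypothesis of \autoref{Corollary:Proper}.

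Next, since $\forces_\PP\cf(\check\kappa)=\check\omega_1>\check\omega$, we are precisely in the uncountable branch of that corollary: properness preserves the stationarity of every subset of $S^\kappa_\omega$, so fixing in $V$ a partition of $S^\kappa_\omega$ into $\kappa$ many stationary pieces and adjoining $\kappa\setminus S^\kappa_\omega$ to one of them produces a partition of $\kappa$ into stationary sets that remain stationary after forcing with $\PP$, exactly as required by \autoref{Theorem:proper}. Applying that theorem with $\mu=\omega_1$ gives $\forces_\PP|\check\kappa|=|\check\mu|=\check\omega_1$, that is, $\kappa$ is collapsed onto $\omega_1$.

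I do not expect any genuine obstacle here; the only point worth flagging is bookkeeping, namely that one invokes the uncountable case of \autoref{Corollary:Proper} rather than the $\omega$-case. The $\omega$-case is in fact vacuous for $\sigma$-closed $\PP$, since such a forcing adds no new $\omega$-sequences and therefore cannot change any cofinality to $\omega$; so the whole argument reduces to feeding the single implication ``$\sigma$-closed $\Rightarrow$ proper'' into the previous corollary.
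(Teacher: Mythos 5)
Your proposal is correct and matches the paper's intended argument: the corollary is stated with a \qed precisely because it follows immediately from \autoref{Corollary:Proper} via the standard fact that $\sigma$-closed forcings are proper. Your additional remark that the $\omega$-case is vacuous for $\sigma$-closed forcings is accurate and harmless bookkeeping.
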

One might look at the definition of the Prikry forcing, and attempt to naively generalize it to uncountable cofinalities by using  closed initial segments of a generic club. For an uncountable target cofinality such forcing would be $\sigma$-closed and the above corollary means that it cannot possibly work. This means that the definition of Magidor and Radin forcing using finite approximation is in fact an inescapable reality. We finish this section with a corollary showing the usefulness of this theorem in a context where cardinals are not necessarily preserved.

\begin{corollary}\label{Corollary:Singulars}
If $\PP$ is a $\sigma$-closed forcing, $\lambda$ singular and $\PP$ collapses $\lambda^+$, then $\PP$ collapses $\lambda$. In particular it is impossible to change the successor of a singular cardinal with a $\sigma$-closed forcing.\qed
\end{corollary}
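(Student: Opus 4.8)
The plan is a proof by contradiction. Suppose $\PP$ is $\sigma$-closed, $\lambda$ is singular, $\PP$ collapses $\lambda^+$, but $\PP$ does \emph{not} collapse $\lambda$; fix a $V$-generic $G\subseteq\PP$ and argue in $V[G]$. The first step is to extract the two facts about $\lambda$ that make this situation untenable. Since $\cf^V(\lambda)<\lambda$ and a cofinal sequence of length $<\lambda$ is not destroyed by forcing, $\cf^{V[G]}(\lambda)<\lambda$, so $\lambda$ is a \emph{singular} cardinal of $V[G]$. And since $\PP$ collapses $(\lambda^+)^V$, while no $V$-cardinal — hence no $V[G]$-cardinal — lies strictly between $\lambda$ and $(\lambda^+)^V$, the cardinality of $(\lambda^+)^V$ in $V[G]$ is pinned down to exactly $\lambda$: we have $\lambda\le|(\lambda^+)^V|^{V[G]}<(\lambda^+)^V$, whence $|(\lambda^+)^V|^{V[G]}=\lambda$. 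In particular $\cf^{V[G]}\big((\lambda^+)^V\big)\le\lambda<(\lambda^+)^V$, so $\PP$ changes the cofinality of the cardinal $\lambda^+$, which is regular in $V$.

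Now I would invoke \autoref{Corollary:Proper}. As $\PP$ is $\sigma$-closed it is proper, so the corollary applies to the regular cardinal $\lambda^+$ whose cofinality $\PP$ changes: $\PP$ collapses $\lambda^+$ to its new cofinality, i.e.\ $|(\lambda^+)^V|^{V[G]}$ equals $\cf^{V[G]}\big((\lambda^+)^V\big)$. The right-hand side, being a cofinality, is a regular cardinal of $V[G]$, whereas the left-hand side is $\lambda$, which we just saw is singular in $V[G]$ — a contradiction. Hence $\PP$ must collapse $\lambda$. (Alternatively, instead of \autoref{Corollary:Proper} one may apply \autoref{Theorem:proper} directly: a proper forcing preserves the stationarity of subsets of $S^{\lambda^+}_\omega$, so the required preserved stationary partition of $\lambda^+$ exists trivially.) The final sentence then follows: if a $\sigma$-closed $\PP$ changed the value of $\lambda^+$ without collapsing $\lambda$, then $(\lambda^+)^V$ would have to be collapsed — there being no room for a new cardinal strictly between $\lambda$ and $(\lambda^+)^V$ — and the first part forces $\lambda$ to be collapsed as well.

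I do not expect a genuinely hard step here; the argument is a short combination of bookkeeping with \autoref{Corollary:Proper}. The one point that needs care is verifying that \autoref{Corollary:Proper} is legitimately applicable, namely that $\PP$ actually changes the cofinality of $\lambda^+$ rather than leaving it equal to $\lambda^+$ — and this is exactly where the hypothesis that $\lambda^+$ is collapsed enters, through $|(\lambda^+)^V|^{V[G]}=\lambda<(\lambda^+)^V$. The conceptual heart of the proof is the clash between two constraints on the single quantity $|(\lambda^+)^V|^{V[G]}$: preservation of $\lambda$ forces it to equal $\lambda$, while \autoref{Corollary:Proper} forces it to be a \emph{regular} cardinal of $V[G]$ — impossible once $\lambda$ has been shown to remain singular there.
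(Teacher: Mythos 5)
Your proof is correct and takes essentially the same route as the paper's: both apply the proper-forcing collapse result (\autoref{Theorem:proper}/\autoref{Corollary:Proper}) to the regular cardinal $\lambda^+$, whose cofinality must drop once it is collapsed, and both exploit the upward absoluteness of the singularity of $\lambda$. The paper merely argues directly --- the new cofinality $\mu$ is an uncountable regular cardinal, hence $\mu<\lambda$, so $\lambda^+$ and therefore $\lambda$ is collapsed to $\mu$ --- whereas you package the identical clash (``$|(\lambda^+)^V|$ must be regular in $V[G]$ but would equal the singular $\lambda$'') as a proof by contradiction.
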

\begin{proof}
Since $\lambda^+$ is not a cardinal, its cofinality is some $\mu<\lambda$. By $\sigma$-closure it follows that $\mu>\omega$, and so $\lambda^+$ is collapsed to $\mu$, and so is $\lambda$.
\end{proof}

\section{PCF related restrictions}\label{Section:PCF}

Gitik proved in \cite[Section~3]{Gitik1986} that it is consistent that there is a model in which $\kappa$ is regular, and there is a forcing which changes the cofinality of $\kappa$ to be $\omega_1$ without adding bounded subsets. In particular the forcing does not change cofinalities below $\kappa$, or the cofinalities of other regular cardinals. Gitik in fact showed it is possible for any regular cardinal $\mu<\kappa$ to be the new cofinality of $\kappa$, starting from a $\kappa$ which is measurable of Mitchell order $\mu$.

Assume there are measurable cardinals $\kappa_1,\ldots,\kappa_n$ of Mitchell order $\omega_1$. It is possible to extend Gitik's argument by induction to obtain a model where each $\kappa_i$ is regular, and there is a forcing that changes their cofinality to be $\omega_1$, without changing the cofinalities of other regular cardinals.

As Gitik's forcing is $\kappa^+$-c.c., adds no bounded subsets of $\kappa$, and the cofinality of $\kappa$ is changed to be $\omega_1$, it can be shown that this forcing is $\sigma$-distributive. It follows, if so, that $\sigma$-distributivity can be preserved when extending the argument by induction. Naturally we may ask if this can be extended to infinitely many simultaneous cofinality changes. The next theorem shows that if a positive answer is at all consistent, it will require significantly stronger large cardinal assumptions.
\begin{theorem}\label{theorem:pcf}
Let $\tup{\kappa_n\mid n<\omega}$ be an increasing sequence of regular cardinals, and let $\PP$ be a forcing such that for every $n<\omega$, $\forces_\PP\cf(\check\kappa_n)=\check\mu$. Then either $\PP$ adds an $\omega$-sequence of ordinals, or for every $\lambda\in\pcf(\{\kappa_n\mid n<\omega\})$, $\forces_\PP\cf(\check\lambda)=\check\mu$.
\end{theorem}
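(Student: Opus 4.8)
The plan is to argue that if $\PP$ does not add any $\omega$-sequence of ordinals, then for every $\lambda \in \pcf(\{\kappa_n \mid n<\omega\})$ we have $\forces_\PP \cf(\check\lambda) = \check\mu$. Recall that by the PCF theorem, for each such $\lambda$ there is an ultrafilter $D$ on $\omega$ (in $V$) such that $\lambda = \cf\left(\prod_{n<\omega}\kappa_n / D\right)$; fix a scale, i.e.\ a sequence $\tup{f_\xi \mid \xi < \lambda}$ of functions in $\prod_{n<\omega}\kappa_n$ that is increasing and cofinal modulo $D$. The key observation is that since $\PP$ adds no new $\omega$-sequences of ordinals, $\prod_{n<\omega}\kappa_n$ is the same set in $V$ and in $V[G]$, and moreover $D$ still generates an ultrafilter on $\omega$ there (again because no new subsets of $\omega$ appear). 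Hence in $V[G]$ the sequence $\tup{f_\xi \mid \xi<\lambda}$ is still $<_D$-increasing and, crucially, still $<_D$-\emph{cofinal} in $\prod_{n<\omega}\kappa_n$ computed in $V[G]$. So $\lambda$ remains (the length of) a scale in $V[G]$.

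Next I would transfer the cofinality hypothesis through the ultraproduct. Work in $V[G]$. For each $n$ fix a cofinal map $g_n \colon \mu \to \kappa_n$ witnessing $\cf(\kappa_n) = \mu$; here I must be slightly careful, since a priori these maps live in $V[G]$ and need not come from $V$, but that is fine because the whole argument is carried out in $V[G]$. Define, for $\beta<\mu$, the function $h_\beta \in \prod_{n<\omega}\kappa_n$ by $h_\beta(n) = g_n(\beta)$. I claim $\tup{h_\beta \mid \beta<\mu}$ is $<_D$-cofinal in $\prod_{n<\omega}\kappa_n$: given any $f \in \prod_{n<\omega}\kappa_n$, for each $n$ pick $\beta_n < \mu$ with $g_n(\beta_n) > f(n)$, and let $\beta = \sup_n \beta_n$, which is $<\mu$ because $\mu$ is still regular and uncountable in $V[G]$ (if $\mu$ were $\omega$ we would be in the first case of the dichotomy anyway, since then $\kappa_0$ would have a new $\omega$-sequence cofinal in it — so we may assume $\mu > \omega$, or rather we should note that the statement is trivially about the case $\mu>\omega$). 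Then $h_\beta(n) = g_n(\beta) \geq g_n(\beta_n) > f(n)$ for all $n$, so $f <_D h_\beta$ (in fact everywhere below). Thus $\tup{h_\beta \mid \beta<\mu}$ is cofinal.

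Now I combine the two cofinal families. In $V[G]$ we have two cofinal sequences in the same linear-ish order $(\prod_{n<\omega}\kappa_n, <_D)$ (it is a linear order modulo $D$ since $D$ is an ultrafilter): one of length $\lambda$, namely the scale $\tup{f_\xi}$, and one of length $\mu$, namely $\tup{h_\beta}$. Whenever a directed partial order has a cofinal subset of order type $\mu$ with $\mu$ regular, its cofinality is exactly $\mu$; since it also has a cofinal \emph{well-ordered} chain of length $\lambda$, and the two cofinalities must agree, we get $\cf(\lambda) = \mu$ in $V[G]$, i.e.\ $\cf\left(\prod_{n<\omega}\kappa_n/D\right) = \mu$. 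More directly: build a cofinal map $\mu \to \lambda$ by sending $\beta$ to the least $\xi$ with $h_\beta <_D f_\xi$; this is cofinal in $\lambda$ because the $f_\xi$ are cofinal and the $h_\beta$ are cofinal, so $\cf(\lambda) \leq \mu$, and conversely $\cf(\lambda) \geq \mu$ by interleaving a cofinal $\omega$-... no — by the symmetric argument mapping $\lambda \to \mu$ via least $\beta$ with $f_\xi <_D h_\beta$. Hence $\cf(\lambda) = \mu$ in $V[G]$.

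The main obstacle I anticipate is the justification that the $V$-scale $\tup{f_\xi \mid \xi<\lambda}$ remains cofinal in $V[G]$: this rests squarely on the hypothesis that $\PP$ adds no new $\omega$-sequence of ordinals, which guarantees $(\prod_{n<\omega}\kappa_n)^{V[G]} = (\prod_{n<\omega}\kappa_n)^V$ and that $D$ is still an ultrafilter on $\omega$ in $V[G]$ — so that "cofinal modulo $D$" is an absolute statement between $V$ and $V[G]$. Once that is in hand the rest is a routine comparison of two cofinal families in a linear order, plus the observation that the dichotomy's second horn is only interesting when $\mu > \omega$ (if $\mu = \omega$, then $\kappa_0$ acquires an $\omega$-sequence and we are in the first horn). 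I would also double-check that $\mu$ remains regular in $V[G]$, which again follows from the no-new-$\omega$-sequences hypothesis when $\mu = \omega_1$, and more generally is part of what we should verify or assume; in any case $\cf^{V[G]}(\mu) = \cf^{V[G]}(\lambda)$ would still follow, which suffices for the stated conclusion $\forces_\PP \cf(\check\lambda) = \check\mu$ once one notes $\mu$ is not collapsed in cofinality.
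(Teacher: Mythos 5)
Your proposal is correct and takes essentially the same route as the paper: assume no new $\omega$-sequences, observe that $\prod_{n<\omega}\kappa_n$ and the ultrafilter are unchanged, build the diagonal family $h_\beta(n)=g_n(\beta)$ of length $\mu$ cofinal in the product, and compare it with the $V$-scale of length $\lambda$ to conclude $\cf(\lambda)=\mu$ in the extension. Your side remarks (that $\mu>\omega$ is forced in the second horn, and that $\mu$ is automatically regular in $V[G]$ as a cofinality) match the paper's treatment.
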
  
\begin{proof}
Let $G\subseteq\PP$ be a $V$-generic filter. Suppose that $\PP$ does not add any $\omega$-sequences; in particular this implies that $\mu>\omega$. We will show that in $V[G]$ there is a scale of length $\mu$ in $\prod_{n<\omega}\kappa_n$; from this will follow that if $U$ is an ultrafilter on $\omega$, then the cofinality of $(\prod_{n<\omega}\kappa_n)/U$ is $\mu$. Therefore if $\lambda$ was the cofinality of the product in $V$, it must be that $V[G]\models\cf(\lambda)=\mu$.

We did not add any $\omega$-sequences, and therefore $\prod_{n<\omega}\kappa_n$ is the same object in $V$ and $V[G]$. Additionally, if $U$ is an ultrafilter on $\omega$ in $V$, then $U$ is still an ultrafilter in $V[G]$.

Let $\gamma_n\colon\mu\to\kappa_n$ be a continuous and cofinal function, and denote by $g_\alpha(n)=\gamma_n(\alpha)$. We have that for each $\alpha<\mu$, $g_\alpha\in V$ as it is an $\omega$-sequence of ordinals. Consider $f\in\prod_{n<\omega}\kappa_n$, for each $n$ there exists $\alpha<\mu$ such that $f(n)<g_\alpha(n)$. Therefore there is some $\alpha$ such that for all $n$, $f(n)<g_\alpha(n)$, as wanted.
\end{proof}

\begin{remark}The assumption that $\PP$ does not add an $\omega$-sequence can be replaced by the assumption that $\mu$ is uncountable and $(\prod_{n<\omega}\kappa_n)^V$ is bounding in $(\prod_{n<\omega}\kappa_n)^{V[G]}$.
\end{remark}

It seems evident from Shelah's covering theorem \cite[Chapter~7,~Lemma~4.9]{ShelahCAbook} that changing all the cofinalities in $\pcf(\{\kappa_n\mid n\in\omega\})$ to be $\mu$, without collapsing cardinals and without adding $\omega$-sequences of ordinals   will require very large cardinals, much larger than $o(\kappa)=\omega_1$.

\begin{corollary}
Let $\tup{\kappa_n\mid n < \omega}$ be a sequence of regular cardinals, $\kappa_\omega=\sup\{\kappa_n\mid n<\omega\}$, and $\PP$ a forcing changing the cofinality of each $\kappa_n$ to be $\omega_1$ and preserves cofinalities of regular cardinals above $\kappa_\omega$. Then $\PP\times\PP$ collapses $\omega_1$.
\end{corollary}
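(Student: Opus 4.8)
The plan is to reduce to the situation where $\PP\times\PP$ preserves $\omega_1$ and then derive a contradiction, the point being that two mutually generic copies of a forcing that changes $\cf(\kappa_n)$ to $\omega_1$ for \emph{all} $n$ simultaneously must, together, produce a cofinal $\omega$-sequence in $\omega_1$ — the same mechanism by which the square of Namba forcing collapses $\aleph_1$.

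First I would clear away the easy cases. If $\PP$ itself collapses $\omega_1$ then so does $\PP\times\PP$, since $V[G_0]\subseteq V[G_0\times G_1]$; so assume $\PP$ preserves $\omega_1$, whence $\cf^{V[G_i]}(\kappa_n)=\omega_1$ for both factors' generics and every $n$. Likewise, assume toward a contradiction that $\omega_1$ is still a cardinal in $V[G_0\times G_1]$. Because a cofinal $\omega_1$-sequence in $\kappa_n$ from either factor is present there and $\omega_1$ is regular, no cofinal $\omega$-sequence in $\kappa_n$ can appear, so $\cf^{V[G_0\times G_1]}(\kappa_n)=\omega_1$ as well.

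Next I would extract generic data from each factor by way of \autoref{theorem:pcf}. Put $\lambda:=\max\pcf(\{\kappa_n\mid n<\omega\})$; a standard computation in $\pcf$ theory (the cofinality of $\prod_n\kappa_n$ under eventual domination equals $\max\pcf$, and this exceeds $\kappa_\omega$ by a diagonal argument) gives $\lambda>\kappa_\omega$, and $\PP$ preserves the regularity of $\lambda$. Hence, applying \autoref{theorem:pcf} with $\mu=\omega_1$, the second alternative fails — since $\cf^{V[G_0]}(\lambda)=\lambda\neq\omega_1$ — so $\PP$ adds a new $\omega$-sequence of ordinals; reading the Remark after \autoref{theorem:pcf} contrapositively, $(\prod_n\kappa_n)^V$ is not bounding in $(\prod_n\kappa_n)^{V[H]}$ for any $\PP$-generic $H$. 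So for $i=0,1$ I fix $g_i\in(\prod_n\kappa_n)^{V[G_i]}$ with $\{n\mid g_i(n)\ge f(n)\}$ infinite for all $f\in(\prod_n\kappa_n)^V$, a continuous increasing cofinal $c^i_n\colon\omega_1\to\kappa_n$ in $V[G_i]$, and an increasing, cofinal-under-eventual-domination scale $\langle f_\xi\mid\xi<\lambda\rangle$ of $\prod_n\kappa_n$ lying in $V$. I would also record the dual fact coming from $\cf^{V[G_i]}(\kappa_n)=\omega_1$ together with the regularity of $\lambda>\omega_1$: there is $\gamma_i<\omega_1$ so that the single function $\langle c^i_n(\gamma_i)\mid n<\omega\rangle$ eventually dominates every member of $(\prod_n\kappa_n)^V$ (cofinally many of the $f_\xi$ are ``captured'' below level $\gamma_i$ of the $c^i$-ladder).

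The heart of the argument, carried out in $V[G_0\times G_1]$, is to play the $G_0$-data against the $G_1$-data. The function $g_0\in V[G_0]$ is unbounded over $V$, yet for each $n$ it sits at some level $\delta_n:=\sup\{\beta<\omega_1\mid c^1_n(\beta)\le g_0(n)\}<\omega_1$ of the ladder coming from the \emph{other}, mutually generic, factor. The claim is that $\sup_n\delta_n=\omega_1$; granting it, $\langle\delta_n\mid n<\omega\rangle$ is a cofinal $\omega$-sequence in $\omega_1$, so $\omega_1$ was not preserved, a contradiction. If instead $\sup_n\delta_n=\gamma^*<\omega_1$, then $g_0$ lies pointwise below the single $V[G_1]$-function $\langle c^1_n(\gamma^*+1)\mid n<\omega\rangle$, and the task is to rule this out using that $G_1$ is $\PP$-generic over $V[G_0]$ (so that $V[G_1]$-functions dominating $g_0$ are constrained relative to $V$), in combination with the capture levels $\gamma_0,\gamma_1$ and the non-bounding of $(\prod_n\kappa_n)^V$. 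This is the one step that genuinely uses the product structure and does not follow formally from \autoref{theorem:pcf} — indeed, applied to $\PP\times\PP$ that theorem says nothing, since $\PP\times\PP$ trivially adds an $\omega$-sequence. I expect the verification of this claim to be the main obstacle: morally it is the same mutual-genericity phenomenon that makes $\mathrm{Namba}\times\mathrm{Namba}$ collapse $\aleph_1$, and everything else is bookkeeping with \autoref{theorem:pcf} and scales.
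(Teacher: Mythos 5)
Your setup (using \autoref{theorem:pcf} and its remark to extract a function $g_0\in(\prod_n\kappa_n)^{V[G_0]}$ that is not eventually dominated by any ground-model function, plus the ladders $c^i_n$) is fine, but the proof stops exactly where the corollary lives, and the dichotomy you set up resolves trivially the wrong way. If $\omega_1$ is preserved in $V[G_0\times G_1]$, then each $\delta_n$ is a countable ordinal and $\sup_n\delta_n<\omega_1$ is automatic; indeed \emph{every} function in $(\prod_n\kappa_n)^{V[G_0\times G_1]}$ --- not just $g_0$ --- is dominated by a single level of the $G_1$-ladder, by the same regularity argument that shows a single level dominates each ground-model function. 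So ``$g_0$ lies below level $\gamma^*+1$ of the other ladder'' carries no information, and the whole content of the corollary is concentrated in the step you defer: showing that $g_0$ is not eventually dominated by any function of $V[G_1]$ (not merely of $V$). This can be done: from ``$p\forces_\PP\dot g_0$ is not dominated mod finite by any ground-model function'' one checks in $V$ that for every $p'\leq p$ and every $N$ there is $n\geq N$ for which the set of values that extensions of $p'$ can force on $\dot g_0(n)$ is unbounded in $\kappa_n$ (otherwise the pointwise supremum of forceable values would be a dominating function lying in $V$); hence for each $h\in V[G_1]$ the set of conditions forcing $\dot g_0(n)>h(n)$ for some $n\geq N$ is dense, belongs to $V[G_1]$, and is met by $G_0$, which is generic over $V[G_1]$ by mutual genericity. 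None of this appears in your write-up, and without it there is no proof.

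For comparison, the paper's argument avoids this machinery. It takes $g_\alpha$ to be the $\alpha$-th level of the ladder itself, $g_\alpha(n)=\gamma_n(\alpha)$; notes that preservation of cofinalities above $\kappa_\omega$ forces all sufficiently high levels to be new over $V$ (essentially your $\pcf$ computation); observes that if $\omega_1$ survives in $V[H_0\times H_1]$ then the two ladder systems agree on a club $C\subseteq\omega_1$ of indices (two continuous cofinal maps from $\omega_1$ into $\kappa_n$ agree on a club, and one intersects countably many clubs of $\omega_1$); and then picks $\gamma\in C$ large enough that $\dot g_\gamma^{H_0}=\dot g_\gamma^{H_1}\notin V$, contradicting the fact that mutually generic extensions share no new sets. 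If you want to keep your Namba-style interleaving you must supply the density argument above; otherwise the agreement-on-a-club mechanism is the shorter route.
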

\begin{proof}
Using the notation of the previous proof, in $V[G]$ we know that the $g_\alpha$'s added by $\PP$ have the property that for some large enough $\alpha$, $g_\alpha$ is not bounded by any function in $(\prod_n\kappa_n/U)^V$, where $U\in V$ is some ultrafilter such that $\cf(\prod_n\kappa_n/U)=\lambda>\kappa_\omega$ (note that $U$ might not be an ultrafilter in $V[G]$ and the reduced product, as calculated in $V[G]$ might not be a linear order anymore, which is exactly why we cannot say that $g_\alpha$ is bounding). If that did not happen, then $\lambda$ had changed its cofinality, contrary to the assumption.

Let $\dot g_\alpha$ be a $\PP$-name for $g_\alpha$, and let $H_0\times H_1$ be a $V$-generic filter for $\PP\times\PP$. If $\omega_1$ is not collapsed in $V[H_0\times H_1]$, then on a club $C\subseteq\omega_1$ we have that $\dot g_\alpha^{H_0}$ and $\dot g_{\alpha}^{H_1}$ are identical, for $\alpha\in C$. 

For every $\alpha$ large enough, $\dot g_\alpha^{H_0}$ is not bounded by the product $(\prod_{n<\omega} \kappa_n / U)^V$ and in particular it is not in $V$. So there is a countable ordinal $\beta < \omega_1$ and a condition $p_0\in H_0$ such that $p_0 \forces_\PP \forall \alpha > \check{\beta}:\dot g_\alpha\notin V$. The same holds for $H_1$ (maybe with a larger ordinal $\beta$), so let us pick a condition $\tup{p_0, p_1}\in H_0\times H_1$ that forces that for every $\alpha > \beta$, $\dot{g}_\alpha^{H_0}, \dot{g}_\alpha^{H_1}\notin V$. 

Since $C$ is forced to be unbounded, there is a condition $\tup{q_0, q_1} \leq_{\PP\times\PP} \tup{p_0, p_1}$ in $H_0\times H_1$ that forces $\gamma \in C$ for some $\gamma > \beta$. In particular it forces that $\dot{g_\gamma}^{H_0} = \dot{g_\gamma}^{H_1} \notin V$. But this is impossible, since $H_0, H_1$ are mutually generic and therefore realize differently every name of a new set.   
\end{proof}
Note that virtually the same proof shows that this result holds also for different forcing notions $\PP$ and $\QQ$. If both $\PP$ and $\QQ$ change the cofinality of each $\kappa_n$ to $\omega_1$ while preserving cofinalities above $\kappa_\omega$, then $\PP\times \QQ$ collapses $\omega_1$. 
\begin{corollary}
Let $\PP$ be Magidor forcing for adding a cofinal sequence of order type $\omega_1\cdot\omega$ to a measurable cardinal of Mitchell order $\omega_1 + 1$. Then $\PP\times\PP$ collapses $\omega_1$. 
\end{corollary}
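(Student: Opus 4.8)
The plan is to reduce the assertion to the previous corollary---the one stating that $\PP\times\PP$ collapses $\omega_1$ whenever a forcing $\PP$ changes the cofinality of each member of an $\omega$-sequence of regular cardinals to $\omega_1$ while preserving cofinalities above their supremum---by unfolding the internal structure of $\PP$. Let $\vec U=\tup{U_\beta\mid\beta\le\omega_1}$ be the coherent sequence of normal measures on $\kappa$ witnessing $o(\kappa)=\omega_1+1$. By coherence the set $A=\{\nu<\kappa\mid o(\nu)=\omega_1\}$ belongs to the top measure $U_{\omega_1}$, and the standard factorisation of Magidor--Radin forcing exhibits $\PP$, up to a dense subset, as a two-step iteration $\mathrm{Pr}(U_{\omega_1})*\dot{\mathbb M}$. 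The first step $\mathrm{Pr}(U_{\omega_1})$ is Prikry forcing with $U_{\omega_1}$; it adds no bounded subsets of $\kappa$ and adjoins an increasing cofinal sequence $\tup{\kappa_n\mid n<\omega}$ in $\kappa$ with every $\kappa_n\in A$, so in the intermediate model $W:=V^{\mathrm{Pr}(U_{\omega_1})}$ each $\kappa_n$ is still measurable of Mitchell order $\omega_1$ and $\kappa=\sup_n\kappa_n$. The second step $\dot{\mathbb M}$ is the full-support product $\prod_{n<\omega}\mathbb M_n$, where $\mathbb M_n$ is Magidor forcing at $\kappa_n$ adding a club of order type $\omega_1$ and hence changing $\cf(\kappa_n)$ to $\omega_1$; under $\GCH$ this product has size $\kappa$, so it is $\kappa^+$-c.c.\ and preserves every cofinality of a regular cardinal above $\kappa_\omega:=\kappa$.

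Granting this factorisation, the substantive step is to invoke the previous corollary in the model $W$ for the sequence $\tup{\kappa_n\mid n<\omega}$ and the forcing $\mathbb M$: since $\mathbb M$ changes $\cf(\kappa_n)$ to $\omega_1$ for every $n$ while preserving cofinalities of regular cardinals above $\kappa_\omega$, it follows that $\mathbb M\times\mathbb M$ collapses $\omega_1$ over $W$. As $\mathrm{Pr}(U_{\omega_1})$ adds no bounded subsets of $\kappa$ it preserves $\omega_1$, whence $\omega_1^W=\omega_1^V$ and the collapse is genuine; translating it through the factorisation of $\PP\times\PP$ then gives that $\PP\times\PP$ collapses $\omega_1$.

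The one genuinely delicate point---which I expect to be the main obstacle---is this last translation. A $(\PP\times\PP)$-generic over $V$ carries two \emph{mutually} generic copies of the Prikry generic, hence two a priori distinct block-sequences $\tup{\kappa_n^0}$ and $\tup{\kappa_n^1}$, so the previous corollary does not apply verbatim with a single common ground-model sequence, and the two dominating sequences it produces, $\bar g^0\in V[H_0]$ and $\bar g^1\in V[H_1]$, a priori live in the different products $\prod_n\kappa_n^0$ and $\prod_n\kappa_n^1$. My plan to circumvent this is to re-run the argument of the previous corollary directly inside $V[H_0\times H_1]$: if $\omega_1$ survived there it would still be regular, so---using the continuity of the two Magidor clubs and the fact that each block-sequence remains cofinal in $\kappa$ of order type $\omega$---the two continuous increasing $\omega_1$-sequences $\bar g^0,\bar g^1$ of cofinal $\omega$-maps into $\kappa$ should agree on a club of coordinates $\alpha$ and there compute a single $\PP$-generic $\omega$-sequence of ordinals that is read off from $H_0$ alone and also from $H_1$ alone, contradicting that mutually generic filters realise any name for a new set differently. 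Making precise the comparison of the two sequences on a club---the place where the original argument used that $\prod_n\kappa_n$ is one fixed object---is exactly where the extra care is required; everything genuinely new in the proof is confined to this translation of the combinatorics of the previous corollary into the setting of Magidor forcing.
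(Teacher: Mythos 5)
Your decomposition $\PP=\QQ\ast\dot\RR$ (Prikry forcing with the top measure followed by a forcing turning each Prikry point into a cardinal of cofinality $\omega_1$) is the same as the paper's, and you have correctly located the crux: a $(\PP\times\PP)$-generic factors as $(\QQ\times\QQ)\ast(\dot\RR_0\times\dot\RR_1)$ with two \emph{mutually generic} Prikry sequences $\tup{\kappa_n^0}$ and $\tup{\kappa_n^1}$, so your ``substantive step'' --- applying the previous corollary over a single intermediate model $W=V^{\QQ}$ to conclude that $\RR\times\RR$ collapses $\omega_1$ over $W$ --- addresses the wrong forcing: $\PP\times\PP$ does not project onto $\QQ\ast(\dot\RR\times\dot\RR)$, and no ``translation through the factorisation'' is available. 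Your fallback plan is to re-run the argument inside $V[H_0\times H_1]$ and argue that the two $\omega_1$-sequences $\bar g^0,\bar g^1$ agree on a club. But in the previous corollary that agreement came from the fact that, for each fixed $n$, $\gamma_n^{H_0}$ and $\gamma_n^{H_1}$ are two continuous cofinal maps $\omega_1\to\kappa_n$ into \emph{the same} ground-model cardinal $\kappa_n$, hence agree on a club of $\omega_1$; here $g^0_\alpha(n)$ lies in the Magidor club of $\kappa_n^0$ and $g^1_\alpha(n)$ in that of $\kappa_n^1$, two different cardinals in general, so there is no reason whatsoever for agreement and the step fails. This is a genuine gap, not a technicality.

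The missing idea is a density argument in the product $\QQ\times\QQ$: given a condition with stems $s,t$ and measure-one sets $A,B$, one may extend both stems by a common point of $A\cap B\in U_{\omega_1}$, so the two Prikry sequences intersect in an unbounded, hence infinite, set. This produces a single $\omega$-sequence $\tup{\mu_n\mid n<\omega}$ of cardinals, living in $V[H_0\times H_1]$ (in fact obtainable below a condition), each of Mitchell order $\omega_1$ and appearing in both Prikry sequences, whose cofinalities are changed to $\omega_1$ by $\dot\RR_0^{H_0}$ and by $\dot\RR_1^{H_1}$ alike. Now $\prod_n\mu_n$ is one fixed object, and the two-forcing variant of the previous corollary (the remark following it) applies; cofinality preservation above $\kappa$ comes from $\kappa$-centredness of $\PP$, hence of $\PP\times\PP$, with no need for the $\GCH$ hypothesis you invoke. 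With the common-subsequence step inserted, your outline becomes the paper's proof; without it, the argument does not close.
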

\begin{proof} We give a sketch of the proof.
Let $\kappa$ be the measurable cardinal with $o(\kappa) = \omega_1 + 1$ for which we are going to add a cofinal sequence of order type $\omega_1 \cdot \omega$. Note that although $\PP$ changes the cofinality of $\omega$ many cardinals to $\omega_1$ (the cardinals in the places $\omega_1, \omega_1 \cdot 2,\dots$ in the Magidor sequence), since this sequence is not in the ground model we cannot apply the previous corollary out of the box. 

We overcome this difficulty by observing that $\PP$ can be decomposed into an iteration of a Prikry forcing and a forcing that changes the cofinality of the cardinals in the Prikry sequence to $\omega_1$. The first part adds a Prikry sequence relative to a measure $U\subseteq\mathcal P(\kappa)$ such that $o(U)=\omega_1$, without adding any bounded sets. Let $\PP = \QQ \ast\dot \RR$ where $\QQ$ is the Prikry forcing. First we observe that $\PP$ is $\kappa$-centered, therefore $\PP\times\PP$ is $\kappa$-centered as well, and so $\PP\times\PP$ does not change any cofinalities above $\kappa$. Consequently neither does any of these forcings.

Let $H_1 \times H_2$ be a generic filter for $\QQ \times \QQ$. Then in $V[H_1\times H_2]$ there are forcing notions $\dot\RR^{H_1}$, $\dot\RR^{H_2}$ and each of them changes the cofinality of a cofinal sequence of cardinals to $\omega_1$. By density arguments, there are unboundedly many cardinals that appear in both the Prikry sequence of $H_1$ and $H_2$, so we can pick a cardinal in both sequences, and apply the previous corollary.
\end{proof}
\begin{remark} For every $\alpha \leq \omega_1$, if $\PP$ is the Magidor forcing for a measurable of Mitchell order $\alpha$, then $\PP\times \PP$ preserves all cardinals.
\end{remark}
\section{Amalgamable and Reflective Forcing}\label{Section:Reflective}

Two properties we consider ``nice'', or well-behaved, of a forcing which adds subsets to a cardinal $\kappa$ are: weak homogeneity, and not adding bounded subsets to $\kappa$. The Prikry forcing is indeed very nice in this aspect, while changing the cofinality of $\kappa$ to $\omega$. We want to know about the interaction between homogeneity, not adding bounded sets to $\kappa$, and changing the cofinality of $\kappa$ to be uncountable. We do this in this section by weakening the first two properties and investigating them instead.

Recall that a forcing $\BB$ is called \textit{weakly homogeneous} if for every $p,q\in\BB$ there is an automorphism $\pi$ of $\BB$ such that $\pi q$ is compatible with $p$. We are only interested in the case where $\BB$ is a complete Boolean algebra, since there are $\PP$ and $\PP'$ such that $\PP$ is weakly homogeneous, $\PP'$ is rigid (having no non-trivial automorphisms), and both with the same Boolean completion (which is weakly homogeneous). 

\begin{theorem}\label{theorem:decisive}
Let $\BB$ be a complete Boolean algebra. The following are equivalent:
\begin{enumerate}
\item $\BB$ is weakly homogeneous.
\item If $G$ is $V$-generic for $\BB$ then for every $p\in\BB$ there is a $V$-generic $H$ such that $p\in H$ and $V[G]=V[H]$.
\item For every $\varphi(\dot x_1,\ldots,\dot x_n)$ in the language of forcing, and $u_1,\ldots,u_n\in V$, $1_\BB$ decides the truth value of $\varphi(\check u_1,\ldots,\check u_n)$.
\end{enumerate}
\end{theorem}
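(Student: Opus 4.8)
The plan is to prove the cycle $(1)\Rightarrow(2)\Rightarrow(3)\Rightarrow(1)$; the first two implications are short, and $(3)\Rightarrow(1)$ carries the weight. For $(1)\Rightarrow(2)$: weak homogeneity says precisely that the orbit $\{\pi(p)\mid\pi\in\operatorname{Aut}(\BB)\}$ of each nonzero $p\in\BB$ is predense, i.e. meets the set of conditions compatible with any prescribed nonzero element. This orbit lies in $V$, so a $V$-generic $G$ meets it; choosing $\pi\in\operatorname{Aut}(\BB)$ with $\pi(p)\in G$ and setting $H:=\pi^{-1}[G]$, the filter $H$ is $V$-generic (an automorphic image of a generic filter), contains $p$, and satisfies $V[H]=V[G]$ because $\pi\in V$ makes $G$ and $H$ interdefinable over $V$. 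For $(2)\Rightarrow(3)$: treat $u_1,\dots,u_n$ as a single parameter $u$. If $1_\BB$ does not decide $\varphi(\check u)$, then some $p>0$ forces $\neg\varphi(\check u)$ and there is a $V$-generic $G$ with $V[G]\models\varphi(u)$; by $(2)$ pick $H$ with $p\in H$ and $V[H]=V[G]$, whence $V[H]\models\neg\varphi(u)$ (as $p\in H$) and $V[H]=V[G]\models\varphi(u)$ — impossible, since $\varphi(u)$ has parameters only from $V$ and hence the same truth value in the two equal models.

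The substantive implication is $(3)\Rightarrow(1)$, and the first step is to recover the content of $(2)$. Fix a parameter $z\in V$ — the power set, computed in $V$, of a cardinal large relative to $|\BB|$ — so that the ground model is uniformly definable from $z$ in every set-forcing extension (Laver, Woodin). For a fixed nonzero $q\in\BB$, the assertion ``the universe is a $(\BB\res q)$-generic extension of the ground model'', which is first-order with parameters $\BB$, $q$, $z$, holds in at least one $\BB$-generic extension: for any $(\BB\res q)$-generic $G'$, the model $V[G']$ is such an extension (witnessed by $G'$ itself) and is also a $\BB$-generic extension (via the induced filter $\{b\in\BB\mid b\wedge q\in G'\}$). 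Hence by $(3)$ the assertion holds in every $\BB$-generic extension $V[G]$, i.e. every $\BB$-generic extension is a $(\BB\res q)$-generic extension. As conversely every $(\BB\res q)$-generic extension is a $\BB$-generic extension, $\BB\res p$ and $\BB\res q$ produce exactly the same $V$-generic extensions, for all nonzero $p,q$.

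To finish $(3)\Rightarrow(1)$, it is enough, given nonzero $p$ and $q$, to produce $\pi\in\operatorname{Aut}(\BB)$ with $\pi(q)$ compatible with $p$; when $p\wedge q\neq 0$ the identity works, so we may assume $p\wedge q=0$, whence $0<p,q<1$. From the previous paragraph I would extract, in $V$, isomorphisms $\phi\colon\BB\res q\to\BB\res p$ and $\chi\colon\BB\res\neg q\to\BB\res\neg p$; then, using the canonical decompositions $\BB\cong(\BB\res q)\times(\BB\res\neg q)$ and $\BB\cong(\BB\res p)\times(\BB\res\neg p)$, the map $\psi\colon\BB\to\BB$, $\psi(b)=\phi(b\wedge q)\vee\chi(b\wedge\neg q)$, is an automorphism of $\BB$ with $\psi(q)=p$, which is compatible with $p$.

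I expect the main obstacle to be exactly this extraction: passing from ``$\BB\res p$ and $\BB\res q$ have the same $V$-generic extensions'' to ``$\BB\res p\cong\BB\res q$ in $V$''. The natural attempt is to take $G$ generic for $\BB\res q$; then $V[G]$ is also $(\BB\res p)$-generic, so there is $H\in V[G]$, $(\BB\res p)$-generic over $V$, with $V[H]=V[G]$, and symmetrically $G\in V[H]$; evaluating Boolean values of names for $H$ and for $G$ yields, below suitable conditions, complete embeddings in both directions between $\BB\res p$ and $\BB\res q$. Turning a pair of mutual complete embeddings into an isomorphism calls for a Cantor--Schr\"oder--Bernstein-type argument for complete Boolean algebras, and one must track which conditions one descends to and glue the local isomorphisms obtained into the global $\psi$ (and likewise into $\chi$). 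That bookkeeping, together with verifying that the Cantor--Schr\"oder--Bernstein principle one needs is available in the required form, is where the difficulty concentrates.
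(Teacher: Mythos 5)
Your $(1)\Rightarrow(2)$ and $(2)\Rightarrow(3)$ are correct (the paper runs the cycle the other way, $(1)\Rightarrow(3)\Rightarrow(2)\Rightarrow(1)$, but these easy legs are interchangeable). The second paragraph of your $(3)\Rightarrow(1)$ is also fine as far as it goes, and in fact it already establishes $(2)$: ``every $\BB$-generic extension is a $(\BB\res p)$-generic extension'' is precisely the statement that for every generic $G$ and every $p$ there is a $V$-generic $H$ with $p\in H$ and $V[H]=V[G]$. The problem is the final step, where you try to manufacture the automorphism by hand.

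The intermediate goal you set --- isomorphisms $\phi\colon\BB\res q\to\BB\res p$ and $\chi\colon\BB\res\neg q\to\BB\res\neg p$ in $V$, hence an automorphism with $\psi(q)=p$ --- is not attainable: it is strictly stronger than weak homogeneity and already fails for algebras satisfying (1)--(3). Take $\BB=\prod_{\alpha<\omega_1}\mathbb{C}$, the completion of the lottery sum of $\omega_1$ copies of Cohen forcing ($\mathbb{C}$ the Cohen algebra). This is weakly homogeneous (swap two coordinates, then compose with a coordinatewise automorphism of $\mathbb{C}$), but for $p$ the indicator of coordinate $0$ and $q=\neg p$ one has $\BB\res p\cong\mathbb{C}$, which is c.c.c., while $\BB\res q\cong\prod_{1\le\alpha<\omega_1}\mathbb{C}$ has an antichain of size $\omega_1$; so no $\phi$ exists. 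Weak homogeneity only asks that $\pi q$ be \emph{compatible} with $p$, so the correct local goal is $\pi(q')=p'$ for some $0<q'\le q$ and $0<p'\le p$. Mutual genericity does yield a local isomorphism $\BB\res q'\cong\BB\res p'$ below suitable conditions, but extending it to an automorphism of all of $\BB$ (one must also match the complements of $q'$ and $p'$) is exactly the content of the Vop\v{e}nka--H\'ajek theorem, which the paper invokes via Grigorieff to get $(2)\Rightarrow(1)$: if $V[G]=V[H]$ for two $V$-generic filters on the same complete Boolean algebra, some automorphism carries $G$ onto $H$. Your Cantor--Schr\"oder--Bernstein idea does not substitute for this: mutual complete embeddability of complete Boolean algebras does not imply isomorphism, and the Sikorski--Tarski version requires each algebra to be isomorphic to a \emph{principal ideal} of the other, which is not what the mutual-interpretation argument produces. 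The efficient repair is to stop after your second paragraph, which gives $(2)$, and then cite Vop\v{e}nka--H\'ajek for $(2)\Rightarrow(1)$.
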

\begin{proof}The following is an outline of the proof. The implication from (1) to (3) is a well-known theorem. Assuming (3) holds, then taking $\varphi(\check p)$ to be ``There is a generic filter $H$ such that $p\in H$, and $\forall x$, $x\in V[H]$'', is decided by $1_\BB$ and of course it must be decided positively (genericity can be easily formulated using the set of dense subsets of $\BB$ in $V$ as a parameter of $\varphi$).

Finally, (2) to (1) follows from the theorem of Vop\v{e}nka and H\'ajek, that if $V[H]=V[G]$ where $G,H$ are $V$-generic filters for a complete Boolean algebra $\BB$, then there is an automorphism $\pi$ of $\BB$ such that $\pi''G=H$. In particular if $q\in G$ and $p\in H$ then $\pi q$ is compatible with $p$ (see \cite[Section~3.5,~Theorem~1]{Grigorieff1975} for a full discussion about the theorem).
\end{proof}

Of course, the second and third conditions may apply to any notion of forcing, and they are preserved by passing on to the Boolean completion. So checking them would suffice in order to show that the Boolean completion of $\PP$ is weakly homogeneous.

A natural weakening of the second condition is to say that for every $p\in\PP$ there is some $H$ which is $V$-generic and $p\in H$, without requiring $V[G]=V[H]$. It is unclear whether or not this weakening is indeed equivalent to the above conditions. But it does entail that the maximum condition of $\PP$ decides some statements about the ground model. More specifically if $\varphi(\dot x,\check u_1,\ldots,\check u_n)$ is a statement which is upwards absolute, then $1_\PP$ decides the truth value of $\forall x\varphi(x,\check u_1,\ldots,\check u_n)$. In particular ``$\kappa$ is singular'' is decided by $1_\PP$.

Further weakening the second condition is the following property:

\begin{definition}We say that a forcing $\PP$ is \textit{amalgamated} by a forcing $\QQ$ if for every $p_1,p_2\in\PP$ there exists some $q\in\QQ$ such that whenever $G$ is a $V$-generic filter for $\QQ$, and $q\in G$ then there are $H_1,H_2\in V[G]$ which are $V$-generic for $\PP$ and $p_i\in H_i$. If $\PP$ is amalgamated by itself then we say that it is \textit{amalgamable}.
\end{definition}

Some basic and immediate observations:
\begin{enumerate}
\item $\PP$ is amalgamated by $\PP\times\PP$.
\item If $\PP$ is amalgamated by $\QQ$ then for all $\RR$, $\PP$ is amalgamated by $\QQ\times\RR$.
\item If $\QQ$ amalgamates $\PP$ and $\QQ$ is $\lambda$-distributive, then $\PP$ is also $\lambda$-distributive.
\item Every weakly homogeneous forcing is amalgamable.
\item Radin forcing is amalgamable, although not weakly homogeneous.
\end{enumerate}

Let us examine a representative case for amalgamable forcings, the Magidor forcing which changes the cofinality of $\kappa$ to be $\omega_1$. This is a relatively ``nice'' forcing, which does not add new sets to very small cardinals and does not collapse any cardinals. It does, however, add new bounded subsets to $\kappa$. More specifically, it changes cofinality of many cardinals below $\kappa$ to be $\omega$. 

Can an amalgamable forcing change the cofinality of $\kappa$ to be $\omega_1$ without adding bounded subsets at all? The answer is negative, as shown in \autoref{Theorem:Reflective changes cofinalities}. To approach this problem, we weaken ``not adding bounded subsets''. Instead of not adding bounded sets, we want to be able and foresee possible changes to the stationary sets of the target cofinality. 

Returning to the Magidor forcing which changes the cofinality of $\kappa$ to be $\omega_1$, recall that we assume the existence of a sequence $\tup{U_\alpha\mid\alpha<\omega_1}$ of normal measures on $\kappa$, increasing in the Mitchell order. Given $X\subseteq\kappa$, we define $h(X)=\{\alpha<\omega_1\mid X\in U_\alpha\}$. This is a homomorphism of Boolean algebras. Moreover, if $\dot{c}$ is the canonical name of the generic club added by the forcing, $h(X) = \{\alpha < \omega_1 \mid\ \forces\dot{c}(\check\alpha) \in\check X\}$ up to a non-stationary subset of $\omega_1$. So $h$ predicts, in some sense, the structure of generic clubs in $\kappa$.

This gives us a definition in $V$ of an ideal on $\kappa$, $I=\{X\subseteq\kappa\mid h(X)\in\NS_{\omega_1}\}$. We can also define $I$ through the forcing relation and $\dot c$, $I=\{X\subseteq\kappa\mid\ \forces \dot c^{-1}(\check X)\in\check\NS_{\omega_1}\}$ (since the Magidor forcing does not add subsets to $\omega_1$, there is no confusion as to where $\NS_{\omega_1}$ is computed). If $h$ guessed what sort of changes the stationary sets undergo, $I$ predicts which sets will become non-stationary sets after forcing with the Magidor forcing. This property is generalized in the following definition.
 
\begin{definition}\label{Def:reflective forcing}Let $\PP$ be a forcing notion, $\kappa>\mu>\omega$ two regular cardinals. We say that $\PP$ is \textit{$(\kappa,\mu)$-reflective} if there exists a $\PP$-name $\dot c$ and an ideal $I\subseteq\power(\kappa)$ such that:
\begin{enumerate}
\item $\PP$ does not add new subsets to $\mu$,
\item $\forces_\PP\dot c\colon\check\mu\to\check\kappa \text{ is a continuous and cofinal function}$,
\item for every stationary $S\subseteq\kappa$ in the ground model, $\forces_\PP \check S\in\check I\leftrightarrow \dot c^{-1}(\check S) \in\check\NS_\mu$.
\end{enumerate}
It follows that $\power(\kappa)/ I$ has cardinality $<\kappa$ and therefore it is $\kappa$-c.c.\footnote{If $\kappa\leq 2^\mu$, then the cofinal sequence encodes a new subset of $\mu$ as a $\mu$-sequence in $2^{\mu\times\mu}$. Therefore $2^\mu<\kappa$. If $A\sdiff B\notin I$, then after forcing with $\mathbb{P}$, $c^{-1}(A) \neq c^{-1}(B)$ and the requirement on the chain condition is fulfilled.} We will call $\power (\kappa)/I$ the \textit{reflected forcing} of $\PP$. The remark above shows that $I$ is a very saturated ideal. As we will see shortly, the fact that $I$ is defined using the generic club $\dot{c}$ implies that $I$ is weakly normal (see \cite{Kanamori1976}) and $I$ is $\mu$-closed.    
\end{definition}
\begin{remark}
If $\PP$ is a $(\kappa,\mu)$-reflective forcing then $I$, as above, is unique. To see this, note that if $\dot c_1,\dot c_2$ are two names for continuous cofinal functions from $\mu$ to $\kappa$, then $\forces_\PP\rng\dot c_1\triangle\rng\dot c_2$ is non-stationary. Therefore the symmetric difference between $\dot c_1^{-1}(S)$ and $\dot c_2^{-1}(S)$ is non-stationary as well.
\end{remark}

We denote by $\RR_\PP$ the reflected forcing of $\PP$. Let us denote by $U$ the generic ultrafilter added by $\RR_\PP$. Finally, let us denote by $\tup{M_\PP,\E_\PP}$ the generic ultrapower of $V$ by $U$, $V^\kappa/U$ as defined in $V[U]$. This model need not be well-founded, since $U$ is often not $\sigma$-closed. If $F\colon S\to V$ is a function in $V$, where $S\subseteq\kappa$ is $I$-positive, we denote by $[F]_U$ the equivalence class of $F$ in the model $M$. When the context is clear, we will omit all the subscripts from the notations and write $\RR,M$, etc.

We aim to show that $(\kappa,\mu)$-reflective forcings must change cofinalities below $\kappa$, and that cofinality-changing forcing which are amalgamable and distributive are reflective. One immediate consequence would be that forcings which are slightly homogeneous and change cofinalities to an uncountable one, such as the Magidor or Radin forcings, must do ``some damage'' to the structure of cardinals below $\kappa$. This remark is also applicable to cofinality-changing forcings which collapse $\kappa$.

\begin{theorem}Suppose that $\kappa>\mu>\omega$ are regular cardinals, and let $\PP$ be a forcing changing the cofinality of $\kappa$ to be $\mu$. If $\PP$ can be amalgamated by a forcing $\QQ$ which does not add subsets to $\mu$, then $\PP$ is $(\kappa,\mu)$-reflective.
\end{theorem}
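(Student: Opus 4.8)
The plan is to produce, from the amalgamating forcing $\QQ$, a single $\PP$-name $\dot c$ and an ideal $I \subseteq \power(\kappa)$ witnessing $(\kappa,\mu)$-reflectivity. First I would fix a $\PP$-name $\dot c$ for a continuous cofinal function from $\mu$ to $\kappa$; such a name exists since $\PP$ changes the cofinality of $\kappa$ to $\mu$ and (using that $\QQ$, hence $\PP$ by observation (3) in the list after the definition of amalgamability, is $\mu$-distributive — wait, we only know $\QQ$ adds no subsets to $\mu$, which already gives that $\PP$ adds no subsets to $\mu$, since $\PP$ embeds into $\QQ$-generic extensions). So clause (1) of \autoref{Def:reflective forcing} is immediate, and clause (2) holds by choice of $\dot c$. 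The real content is clause (3): I must define $I$ so that for every ground-model stationary $S \subseteq \kappa$, it is forced that $\check S \in \check I$ if and only if $\dot c^{-1}(\check S)$ is non-stationary in $\mu$.

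The natural definition is
\[
I = \{ S \subseteq \kappa \mid \forces_\PP \dot c^{-1}(\check S) \in \check\NS_\mu \}.
\]
(Here $\NS_\mu$ is unambiguous since no subsets of $\mu$ are added.) With this definition one direction of clause (3) is trivial. The key step — and I expect this to be the main obstacle — is the converse: if $S \notin I$, i.e. some condition $p_1 \in \PP$ forces $\dot c^{-1}(\check S)$ to be stationary, I must show that in fact $\forces_\PP \dot c^{-1}(\check S) \in \check\NS_\mu$ fails, meaning it is forced by $1_\PP$ that $\dot c^{-1}(\check S) \notin \check\NS_\mu$, i.e. stationarity is forced by \emph{every} condition. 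This is exactly where amalgamability enters. Suppose toward a contradiction that some $p_2$ forces $\dot c^{-1}(\check S)$ to be non-stationary, as witnessed by a name $\dot D$ for a club in $\mu$ disjoint from $\dot c^{-1}(S)$. Amalgamate $p_1, p_2$ by $q \in \QQ$, and pass to a $\QQ$-generic $G \ni q$; inside $V[G]$ we get $V$-generic filters $H_1 \ni p_1$ and $H_2 \ni p_2$ for $\PP$. Now the point is that $\QQ$ adds no subsets to $\mu$, so $\mu$ is not collapsed and, crucially, the clubs and stationary subsets of $\mu$ computed in $V[H_1]$, in $V[H_2]$, and in $V[G]$ all agree — and moreover $(\dot c^{-1}(S))^{H_1}$ and $(\dot c^{-1}(S))^{H_2}$, being subsets of $\mu$, already belong to $V$, hence are the \emph{same} set. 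Wait: we need $\dot c^{H_1}$ and $\dot c^{H_2}$ themselves to be related. Since both are continuous cofinal functions $\mu \to \kappa$ living in $V[G]$ (a model with no new subsets of $\mu$ over $V$, so no new subsets of $\mu$ over either $V[H_i]$ either), their ranges are clubs in $\kappa$ and by the argument in the Remark after \autoref{Def:reflective forcing} their ranges agree modulo a non-stationary set; I then need to promote this to the statement that $(\dot c^{-1}(S))^{H_1} \sdiff (\dot c^{-1}(S))^{H_2}$ is non-stationary in $\mu$, so that one is stationary iff the other is. Then $V[H_1] \models \dot c^{-1}(\check S)^{H_1}$ is stationary while $V[H_2] \models \dot c^{-1}(\check S)^{H_2}$ is non-stationary, and since both sets are in $V$ and $\mu$-stationarity is absolute between $V$ and these extensions (no new subsets of $\mu$, hence no new clubs), we get a contradiction.

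To make the range-comparison step rigorous I would argue as follows: in $V[G]$, both $\rng \dot c^{H_1}$ and $\rng \dot c^{H_2}$ are clubs of $\kappa$; if they differed on a stationary set, then working in $V[H_1]$ one could force with $\PP$ again (reflecting the situation down) — actually the cleanest route is to observe that the symmetric difference of the two ranges is a subset of $\kappa$ lying in $V[G]$, and use that over $V[H_1]$ the further forcing $\PP$ (realized inside $V[G]$ via $H_2$) adds a continuous cofinal $\mu$-sequence whose range must thread \emph{every} ground-model-of-$V[H_1]$ club, forcing agreement on a club; here I would invoke \autoref{Theorem:proper}-style reasoning or simply the standard fact that any two continuous cofinal $\mu$-sequences in $\kappa$ (with $\mu$ not collapsed) have ranges agreeing on a club, which holds because each range contains a club of closure points and the closure points of both coincide on a club. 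Once $\rng \dot c^{H_1} \sdiff \rng \dot c^{H_2}$ is non-stationary in $\kappa$, a routine pressing-down / continuity argument transfers this to $\dot c^{-1}(S)^{H_1} \sdiff \dot c^{-1}(S)^{H_2}$ being non-stationary in $\mu$, completing the contradiction. Finally, the parenthetical assertions in the statement — that $\power(\kappa)/I$ is $\kappa$-c.c., that $I$ is $\mu$-closed and weakly normal — follow from the already-sketched footnote argument and from the definition of $I$ via the continuous sequence $\dot c$, and need only be remarked upon rather than reproved here.
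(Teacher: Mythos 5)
Your proposal is correct and follows essentially the same route as the paper: you define the same ideal $I=\{S\subseteq\kappa\mid\ \forces_\PP\dot c^{-1}(\check S)\in\check\NS_\mu\}$, use amalgamation to realize two $\PP$-generics inside one $\QQ$-extension, and exploit the facts that the two interpretations of $\dot c$ agree on a ground-model club of $\mu$ and that stationarity of subsets of $\mu$ is absolute since no new subsets of $\mu$ appear. The paper's version is just slightly more streamlined (it works directly with the agreement set $\{\xi<\mu\mid c(\xi)=c'(\xi)\}$ rather than passing through agreement of ranges in $\kappa$), but the content is identical.
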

\begin{proof}
$\PP$ is amalgamated by $\QQ$ which does not add new subsets to $\mu$, therefore $\PP$ has this property as well. Let $\dot c$ be a name such that $\forces_\PP\dot c\colon\check\mu\to\check\kappa$ continuous and cofinal. Moreover $\forces_\QQ\cf(\check\kappa)=\check\mu$. This is because by adding any generic for $\PP$ we add a continuous cofinal function from $\mu$ to $\kappa$. If we had changed the cofinality of $\kappa$, we would have added a new cofinal sequence in $\mu$ (and in fact change its cofinality).

We define an ideal over $\kappa$:
\[I=\{X\subseteq\kappa\mid\ \forces_\PP\dot c^{-1}(\check X)\in\check\NS_\mu\}.\]
The first two conditions of \autoref{Def:reflective forcing} are automatically satisfied for $I$. It remains to verify the last condition. Let $S\subseteq\kappa$ be a stationary set in $V$. 

We claim that if there exists $p\in\PP$ such that $p\forces_\PP\dot c^{-1}(\check S)\in\check\NS_{\omega_1}$, then every condition must force that. Suppose that $p$ is such a condition, given any $p'\in\PP$ pick some $q\in\QQ$ and $H\subseteq\QQ$ which is $V$-generic with $q\in H$ such that there are $G,G'\in V[H]$ which are $V$-generic for $\PP$, and $p\in G$, $p'\in G'$.  Let $c=\dot c^G$ and $c'=\dot c^{G'}$ be the interpretation of the cofinal sequence under these generics. 

Since $\QQ$ did not add subsets to $\mu$, there is a club $E\in V$ such that $c\restriction E=c'\restriction E$ and $E\cap c^{-1}(S)=\varnothing$. Therefore ${c'}^{-1}(S)\cap E=c^{-1}(S)\cap E=\varnothing$, so $p'$ cannot force that $\dot c^{-1}(\check S)\text{ is stationary}$. And therefore no condition can force that, and indeed $\forces_\PP\dot c^{-1}(\check S)\in\check\NS_\mu$.  This completes the proof, since the other direction of the third condition always holds.
\end{proof}

\begin{corollary}If $\PP$ is a weakly homogeneous forcing which changes the cofinality of $\kappa$ to $\mu>\omega$ without adding new subsets of $\mu$, then $\PP$ is $(\kappa,\mu)$-reflective.\qed
\end{corollary}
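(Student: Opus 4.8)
The plan is to read this off the previous theorem, so the only work is to exhibit a suitable amalgamating forcing, and the natural candidate is $\PP$ itself. Recall observation (4) in the list following the definition of amalgamable forcing: every weakly homogeneous forcing is amalgamable. I would first make this explicit. Given $p_1,p_2\in\PP$, weak homogeneity yields an automorphism $\pi$ of $\PP$ with $\pi p_2$ compatible with $p_1$; fix $q\leq p_1,\pi p_2$. If $G$ is $V$-generic with $q\in G$, then $p_1\in G$, and since $\pi\in V$, the pointwise image $(\pi^{-1})''G$ is again a $V$-generic filter, it lies in $V[G]$, and it contains $p_2$. Taking $H_1=G$ and $H_2=(\pi^{-1})''G$ witnesses that $\PP$ is amalgamated by itself.

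Next, by hypothesis $\PP$ adds no new subsets of $\mu$. Hence $\QQ:=\PP$ is a forcing which amalgamates $\PP$ and adds no subsets of $\mu$, so the hypotheses of the preceding theorem are met, and we conclude that $\PP$ is $(\kappa,\mu)$-reflective. The only point requiring a word of care is that weak homogeneity was emphasized above in the context of complete Boolean algebras, whereas here we use the combinatorial form of the definition for a general forcing $\PP$ together with the fact that an automorphism of $\PP$ belongs to $V$ and sends generics to generics; this is entirely routine, and there is no substantive obstacle.
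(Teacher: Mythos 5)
Your proposal is correct and matches the paper's (implicit) argument: the corollary is meant to follow immediately from observation (4) that every weakly homogeneous forcing is amalgamable --- exactly via the automorphism/pointwise-image argument you spell out --- together with the preceding theorem applied with $\QQ=\PP$. The only difference is that you make explicit what the paper leaves as a one-line remark, and your care about generic filters being preserved under automorphisms in $V$ is sound.
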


From the following theorems we learn that a weakly homogeneous forcing which changes the cofinality of $\kappa$ to be uncountable, must add bounded sets. So the two nice properties do not co-exist with changing cofinality to be uncountable. In conjunction with the following theorem, it shows that indeed any forcing changing the cofinality of a regular cardinal $\kappa$ to be uncountable cannot be both homogeneous and preserve cofinalities below $\kappa$. In fact, even weakening ``not adding bounded subsets'' to ``$\kappa$ remains a strong limit cardinal'' implies that it is also not the least inaccessible (and in fact, not the least Mahlo cardinal either), as shown in \autoref{Theorem: omega_1 Mahlo}.

\begin{theorem}\label{Theorem:Reflective changes cofinalities}
If $\PP$ is $(\kappa,\mu)$-reflective, then $\PP$ must change the cofinality of some $\lambda<\kappa$.
\end{theorem}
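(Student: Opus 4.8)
The plan is to argue by contradiction: suppose $\PP$ is $(\kappa,\mu)$-reflective but preserves the cofinality (equivalently, regularity) of every $\lambda<\kappa$. The first observation is that then $\PP$ preserves all cardinals $\leq\kappa$: since $\PP$ does not add subsets to $\mu$, it is $\mu^+$-distributive, so no cardinal $\leq\mu$ is collapsed; and if some $\lambda$ with $\mu<\lambda<\kappa$ were collapsed, its image would have some cofinality $<\lambda$ which $\PP$ would then have to change, contradicting preservation of cofinalities below $\kappa$. Likewise $\kappa$ itself stays a cardinal. So in $V[G]$ the cardinal $\kappa$ has cofinality $\mu$ and is a genuine cardinal, hence a limit cardinal, and moreover $\kappa$ remains regular below in the sense that every $\lambda<\kappa$ keeps its $V$-cofinality.

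Next I would exploit the reflected forcing $\RR=\RR_\PP=\power(\kappa)/I$ and the remark in \autoref{Def:reflective forcing} that $|\power(\kappa)/I|<\kappa$ and that $I$ is $\mu$-closed and weakly normal. Fix a generic $G\subseteq\PP$ and the continuous cofinal $c\colon\mu\to\kappa$. The key point is to look at the induced partition of $\kappa$: consider the sets $S_\beta^{(\lambda)}=\{\alpha<\kappa\mid\cf^V(\alpha)=\lambda\}$ for regular $\lambda<\kappa$, and the club $C=\rng c$ of order type $\mu$ in $\kappa$. For each $\alpha\in C$ that is a limit point of $C$, $\cf^{V[G]}(\alpha)=\cf^{V[G]}(c^{-1}(\alpha))\leq\mu$; but since $\PP$ does not change cofinalities below $\kappa$, $\cf^V(\alpha)=\cf^{V[G]}(\alpha)\leq\mu$. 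Thus $C$ (minus a non-club-many points) is contained in $\bigcup_{\lambda\leq\mu,\ \lambda\text{ regular}}S_\lambda^{(\lambda)}\cup\{\text{successors}\}$, i.e. essentially in $S^\kappa_{\leq\mu}:=\{\alpha<\kappa\mid\cf^V(\alpha)\leq\mu\}$. Equivalently $S^\kappa_{>\mu}\in I$. Now partition $S^\kappa_{>\mu}$ in $V$ into $\kappa$-many stationary pieces $\{T_\xi\mid\xi<\kappa\}$ (possible by Solovay, since $S^\kappa_{>\mu}$ is stationary as $\kappa$ is regular in $V$ and $\mu<\kappa$); each $T_\xi\in I$, but there are $\kappa$-many of them and they are $I$-positive-disjoint in the sense that $T_\xi\triangle T_{\xi'}=T_\xi\cup T_{\xi'}\notin I$ would be needed — here I need to be careful. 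The cleaner route: since $|\power(\kappa)/I|<\kappa$, the ideal $I$ is not $\kappa$-saturated-trivial, so it cannot contain a partition of a stationary set into $\kappa$ pieces while being $<\kappa$-generated as a Boolean algebra — more precisely, if every $T_\xi\in I$ then $\{T_\xi\}$ gives $\kappa$ distinct elements of $\power(\kappa)$ that are pairwise-$I$-equivalent only to $0$, forcing $\kappa$-many antichains in $\power(\kappa)/I$, contradicting its $\kappa$-c.c. (indeed, its size $<\kappa$). This is the contradiction: $S^\kappa_{>\mu}\in I$ is impossible.

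The step I expect to be the genuine obstacle is making the dichotomy "$S^\kappa_{>\mu}\in I$" airtight — i.e.\ showing that if $\PP$ preserves cofinalities below $\kappa$ then $c^{-1}(S^\kappa_{>\mu})$ really is forced to be non-stationary in $\mu$, and not merely that each point of $C$ with $V$-cofinality $>\mu$ is "sparse." The point is that a limit point $\alpha\in\acc(C)$ has $\cf^{V[G]}(\alpha)=\cf(\otp(C\cap\alpha))$, which is a cofinality computed from an ordinal $<\mu$, hence is $\leq\mu$; combined with $\cf^V(\alpha)=\cf^{V[G]}(\alpha)$ this gives $\cf^V(\alpha)\leq\mu$ for all $\alpha\in\acc(C)$, and $\acc(C)$ is club, so $c^{-1}(\acc(C))$ is club in $\mu$ and disjoint from $c^{-1}(S^\kappa_{>\mu})$. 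That handles the main subtlety; after that, the counting argument against the $\kappa$-c.c.\ of $\power(\kappa)/I$ using a Solovay-style partition of the stationary set $S^\kappa_{>\mu}$ into $\kappa$ stationary pieces closes the proof. I would also double-check the edge case where there are few regular cardinals in $(\mu,\kappa)$ — but $S^\kappa_{>\mu}$ is stationary regardless, since $\{\alpha<\kappa\mid\cf^V(\alpha)=\mu^+\}$ already is (as $\mu^+<\kappa$).
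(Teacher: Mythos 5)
Your opening reduction is fine, and the first half of your argument is essentially correct and matches the paper's starting point: if no cofinality below $\kappa$ changes, then for every limit $\beta<\mu$ the continuity of $c$ gives $\cf^{V[G]}(c(\beta))=\cf^{V[G]}(\beta)<\mu$, hence $\cf^V(c(\beta))<\mu$, so $c^{-1}(S^\kappa_{\geq\mu})$ avoids a club and $S^\kappa_{\geq\mu}\in I$ --- equivalently, $S^\kappa_\omega$ is $I$-positive (this is exactly the observation the paper makes, phrased complementarily). The problem is the punchline. Your claimed contradiction is that a Solovay partition of the stationary set $S^\kappa_{>\mu}$ into $\kappa$ stationary pieces, all lying in $I$, produces ``$\kappa$-many antichains in $\power(\kappa)/I$.'' It does not: every $T_\xi\in I$ is identified with $0$ in the quotient, so all $\kappa$ pieces collapse to a \emph{single} element of $\power(\kappa)/I$, and no antichain of nonzero elements appears. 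There is nothing paradoxical about a proper ideal with a small quotient containing $\kappa$ pairwise disjoint stationary sets; on the contrary, since $\NS_\kappa$ is nowhere near this saturated, any such $I$ \emph{must} swallow a great many stationary sets, so the situation you reach is expected rather than contradictory. Nor can you repair this by partitioning a positive set into $\kappa$ many $I$-positive pieces: the $\kappa$-c.c.\ of $\power(\kappa)/I$ (which is given, via the footnote in \autoref{Def:reflective forcing}) is precisely what forbids such a partition from existing, so it cannot be produced to contradict itself.

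The missing ingredient is the generic ultrapower. The paper forces with the reflected forcing $\RR=\power(\kappa)/I$ below the positive set $S^\kappa_\omega$, obtaining a generic ultrafilter $U$ with $S^\kappa_\omega\in U$, and forms $M=V^\kappa/U$ with embedding $j$. Two lemmas are needed: \autoref{Lemma:diagonal}, that $[d]=\sup j''\kappa$ (proved by a Fodor argument along $\rng\dot c^G$, using that $I$ reflects regressiveness), and \autoref{Lemma:continuity}, that $j$ is continuous at ordinals whose cofinality is forced below $\mu$, so in particular $j(\omega)=\omega$. Choosing $F(\alpha)$ a cofinal $\omega$-sequence in $\alpha$ for $\alpha\in S^\kappa_\omega$, \L o\'s gives that $[F]$ is a genuine $\omega$-sequence cofinal in $[d]$ in $M$, and \autoref{Lemma:diagonal} pulls each $[F](n)$ back below some $j(\alpha_n)$ with $\alpha_n<\kappa$; thus $V[U]$ sees an $\omega$-sequence cofinal in $\kappa$, contradicting that the $\kappa$-c.c.\ forcing $\RR$ preserves the regularity of $\kappa$. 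Your proposal contains none of this machinery, and the elementary counting argument you substitute for it is invalid, so there is a genuine gap at the decisive step.
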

\begin{proof}
Let $I$ be the ideal witnessing the reflective property, and $\dot c$ a name for a cofinal and continuous function from $\mu$ to $\kappa$ defining $I$. Forcing with the reflected forcing of $\RR$, we obtain a generic ultrafilter $U$ over $\kappa$. Let $M=M_\PP$, and $j\colon V\to M$ the ultrapower embedding. We digress from the main proof, to prove two lemmas about $M$.

\begin{lemma}\label{Lemma:diagonal}
Let $d$ be the diagonal function on $\kappa$, then $\sup j''\kappa=[d]$.
In other words, for every $M$-ordinal $[f]$, if $M\models[f] \E [d]$, then there is some $\beta < \kappa$ such that $M\models[f] \E j(\beta)$.
\end{lemma}
\begin{proof}[Proof of Lemma]
Suppose that $M\models [f]\E [d]$, then there is some $S\in U$ that for every $\alpha\in S$, $f(\alpha)<d(\alpha)=\alpha$. 
We will find $S_0\subseteq S$ such that $S_0\notin I$ and $f'' S_0 \subseteq \beta < \kappa$, and therefore $S_0\forces_\RR [\check f]\E j(\check\beta)$.

In $V[G]$, where $G$ is a generic filter for $\PP$, the function $f$ restricted to $S\cap\dot c^G$ is regressive on stationary set. Therefore there is a stationary $T\subseteq S\cap \dot c^G$ in $V[G]$ and $\beta<\kappa$, such that $f(\alpha)<\beta$ for every $\alpha\in T$. So in $V$ the set $S_0= f^{-1}(\beta)\notin I$.
\end{proof}

\begin{lemma}\label{Lemma:continuity}
Let $\alpha$ be an ordinal for which there is $p\in\PP$ such that $p\forces_\PP\cf(\check\alpha) < \check\mu$. Then $\sup j''\alpha = j(\alpha)$. In particular $M$ is well-founded at least up to $\mu$, and the critical point of $j$ is at least $\mu$.
\end{lemma}
\begin{proof}[Proof of Lemma]
Let $f\colon S\to\alpha$, for $S\in U$. We want to find $S_0 \subseteq S$, $S_0\notin I$ such that $f''S_0 \subseteq \beta$ for some $\beta<\alpha$. 

Let $\eta<\mu$, $p\forces_\PP\cf(\check\alpha)=\check\eta$ and $G$ be a $V$-generic filter for $\PP$ such that $p\in G$.
For every $\beta<\alpha$, let $S_\beta = \{\xi<\kappa\mid f(\xi)<\beta\}$. Working in $V[G]$, let $c=\dot c^G$ be the generic cofinal function from $\mu$ to $\kappa$, and let $\{\beta_i\mid i<\eta\}$ be a cofinal sequence in $\alpha$. We know that $c^{-1}(S)=\bigcup\{c^{-1}(S_{\beta_i})\mid i<\eta\}$, so it cannot be the case that for every $i$, $c^{-1}(S_{\beta_i})$ is non-stationary. Therefore at least one $\beta_i$ with $c^{-1}(S_{\beta_i})$ stationary. Therefore $S_{\beta_i}\notin I$ back in $V$. From this it follows by induction that $j(\alpha)=\alpha$ for all $\alpha<\mu$.
\end{proof}

We return to the proof of the theorem. Assume towards a contradiction that $\PP$ does not change cofinalities below $\kappa$. This implies that $S^\kappa_\omega\notin I$, since in that case if $G\subseteq\PP$ is $V$-generic, then $(S^\kappa_\omega)^V=(S^\kappa_\omega)^{V[G]}$ and therefore it is a stationary set, and in particular it meets $\rng \dot c^G$.

We may assume without loss of generality that $S^\kappa_\omega\in U$. We will show that \[M\models\cf([d])=\cf(\sup j''\kappa)=\omega,\] and therefore $V[U]\models\cf(\kappa)=\omega$ which is impossible since $\RR$ is $\kappa$-c.c., so $\forces_\RR\cf(\check\kappa)=\check\kappa$ (because a $\kappa$-c.c.\ forcing cannot change the cofinality of a regular cardinal $\kappa$).

Now pick some $F\colon S^\kappa_\omega\to\kappa^\omega$ such that $F(\alpha)$ is a sequence cofinal in $\alpha$. Since $S^\kappa_\omega \in U$, by \Los theorem $[F]$ is a cofinal $j(\omega)$ sequence at $[d]$ in $M$. By \autoref{Lemma:continuity}, $j(\omega)=\omega$, and therefore $M\models [F] = \left\{[F](n)\midd n < \omega \right\}$ and $M\models \sup\{[F](n)\mid n<\omega\} = [d]$. Next, working in $V[U]$, from the first lemma it follows that for each $n$ there is some $\alpha_n<\kappa$ such that $M\models [F](n)\leq j(\alpha_n)$, and therefore $\tup{\alpha_n\midd n<\omega}$ is a sequence cofinal in $\kappa$. This is a contradiction, as mentioned above, and therefore we changed some cofinalities below $\kappa$.
\end{proof}

The following is a relatively straightforward consequence of the above proof. We extend it in \autoref{Theorem: omega_1 Mahlo}, but it is worth proving on its own.

\begin{corollary}\label{Corollary:Mahlo}
Suppose $\PP$ is a $(\kappa,\mu)$-reflective forcing such that $\forces_\PP\check\kappa$ is a strong limit cardinal. Then $\kappa$ is a Mahlo cardinal in $V$.
\end{corollary}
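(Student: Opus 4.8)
The plan is to leverage the machinery already developed in the proof of \autoref{Theorem:Reflective changes cofinalities}, namely the generic ultrapower $j\colon V\to M$ arising from the reflected forcing $\RR$, together with \autoref{Lemma:diagonal} and \autoref{Lemma:continuity}. First I would argue that $\kappa$ must be a regular limit cardinal in $V$, i.e. weakly inaccessible, and then upgrade regularity of the stationary set of inaccessibles to get Mahlo. Since $\forces_\PP\check\kappa$ is a strong limit, and forcing cannot turn a successor cardinal into a limit cardinal (cardinals below that are collapsed stay collapsed, and new cardinals are not created below an old successor), $\kappa$ must already be a limit cardinal in $V$; combined with $\kappa$ regular (given), $\kappa$ is weakly inaccessible. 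Actually, since $\PP$ preserves that $\kappa$ is a strong limit and adds no subsets to $\mu<\kappa$, one gets more: the cardinals below $\kappa$ that $\PP$ keeps as cardinals witness that $\kappa$ is a limit of $V$-cardinals; and because $2^\eta<\kappa$ is forced for all $\eta<\kappa$ while small power sets are not shrunk, $\kappa$ is in fact a strong limit in $V$ as well, hence (strongly) inaccessible.

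Next, for Mahloness I would show that the set of inaccessible cardinals below $\kappa$ is stationary in $V$. The key is to run the argument of \autoref{Theorem:Reflective changes cofinalities} relativized to a club: suppose $C\subseteq\kappa$ is a club in $V$ containing no inaccessible cardinal; I want a contradiction. By \autoref{Theorem:Reflective changes cofinalities} itself, $\PP$ changes the cofinality of some $\lambda<\kappa$; more carefully, using the lemmas, one sees that the set of $\alpha<\kappa$ whose cofinality is changed must be $I$-positive — indeed if $T=\{\alpha<\kappa\mid \exists p\in\PP,\ p\forces\cf(\check\alpha)<\check\mu\text{ or }p\forces\cf(\check\alpha)=\check\omega\}$ were in $I$, then on a generic club $\dot c^G$ almost every point would have cofinality $\geq\mu$ and unchanged, and one could push through the closing-off argument to contradict $\forces_\RR\cf(\check\kappa)=\check\kappa$. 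Since $C\notin I$ (as $C$ is a ground-model club and $\PP$, being $(\kappa,\mu)$-reflective, cannot make a club non-stationary — its preimage under $\dot c$ is forced to be a club in $\mu$), the set $C\cap T$ is $I$-positive, so we may assume $C\cap T\in U$. Picking $F$ on $C\cap T$ with $F(\alpha)$ a cofinal sequence in $\alpha$ of the appropriate (ground-model, length-$<\mu$, or length-$\omega$) order type witnessing the cofinality change, \Los's theorem gives that $[F]$ is cofinal in $[d]=\sup j''\kappa$ in $M$; since the relevant ordinal lengths are below $\mu$, \autoref{Lemma:continuity} says $j$ fixes them, so $[F]$ genuinely has that length in $M$; and \autoref{Lemma:diagonal} then lets us project each coordinate $[F](i)$ below some $j(\alpha_i)$ with $\alpha_i<\kappa$, producing in $V[U]$ a cofinal sequence in $\kappa$ of length $<\mu$ or $\omega$ — contradicting $\kappa$-c.c. of $\RR$. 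Hence every club meets an inaccessible, so $\kappa$ is Mahlo.

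The main obstacle I expect is the bookkeeping around which cofinalities get changed and on how large a set. The clean statement "$\PP$ changes cofinality of some $\lambda<\kappa$" is not literally enough; one needs that the set of such $\lambda$ (or at least of $\lambda$ whose new cofinality is $<\mu$, so that \autoref{Lemma:continuity} and \autoref{Lemma:diagonal} apply) is stationary, or failing that, $I$-positive and meeting the given club. I would handle this by re-examining the end of the proof of \autoref{Theorem:Reflective changes cofinalities}: there the contradiction was derived from assuming $S^\kappa_\omega\notin I$; symmetrically, if for a ground-model club $C$ the set $C\cap T$ is in $I$, then $C\setminus T\in U$ can be arranged, meaning $U$-almost every $\alpha$ keeps cofinality $\geq\mu$ in $V[G]$, and then one should check that $[d]$ has cofinality $\geq\mu$ in $M$ — but then, via the generic cofinal $c\colon\mu\to\kappa$ and continuity, one can still read off a cofinal map of length $\mu$ into $[d]$ coordinatized below points $j(\alpha_i)$, again contradicting $\kappa$-c.c. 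Once this "positivity of the cofinality-changing set on every ground-model club" is nailed down, the rest is a direct re-run of the theorem's argument inside the club, and Mahloness follows.
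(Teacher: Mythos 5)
There is a genuine gap at the heart of your stationarity argument. You want to choose, in $V$, a function $F$ on $C\cap T$ with $F(\alpha)$ a cofinal sequence in $\alpha$ of length $<\mu$ ``witnessing the cofinality change''. But the points of $T$ whose cofinality is genuinely changed are precisely those that are regular (or of cofinality $\geq\mu$) in $V$, and for such $\alpha$ no cofinal sequence of length $<\mu$ exists in the ground model --- and $F$ must be a ground-model function for \Los's theorem over $V^\kappa/U$ to apply. (This is exactly why the proof of \autoref{Theorem:Reflective changes cofinalities} works only under the assumption that cofinalities below $\kappa$ are \emph{not} changed, so that $S^\kappa_\omega$ as computed in $V$ carries genuine $\omega$-sequences from $V$.) Worse, your argument never actually uses the hypothesis that $C$ avoids inaccessibles: since $\kappa\setminus T\in I$, the set $C\cap T$ is $I$-positive for \emph{every} club $C$, so if the projection argument went through it would produce a short cofinal sequence in $\kappa$ in $V[U]$ unconditionally and thereby prove that no $(\kappa,\mu)$-reflective forcing with $\kappa$ strong limit exists at all --- refuted by the Magidor forcing. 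A symptom of the same problem is that your stationarity argument makes no use of the strong limit hypothesis (you only invoke it to get inaccessibility of $\kappa$ in $V$), and the final ``patch'' paragraph appeals to the generic club $c$ inside $V[U]$, but $c$ lives in $\PP$-generic extensions, not in extensions by the reflected forcing $\RR$.

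The paper's proof runs in the opposite direction and leans on \autoref{Lemma:magidor}, which is where the strong limit hypothesis enters. One shows directly that $M\models\text{``}[d]$ is regular'': for each $\alpha<\kappa$, Magidor's lemma gives $V[U]\models|j(\alpha)|\leq(\alpha^\mu)^{V[G]}<\kappa$ (the bound is $<\kappa$ because $\kappa$ is forced to remain a strong limit), while by the $\kappa$-c.c.\ of $\RR$ and \autoref{Lemma:diagonal} the linear order $\{x\mid M\models x\E[d]\}$ has cofinality $\kappa$ in $V[U]$; hence $M\models j(\alpha)<\cf([d])$ for every $\alpha<\kappa$, so $M\models\cf([d])=[d]$. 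By \Los theorem $\Reg\cap\kappa\in U$, and since this holds for every generic $U$ we get $\kappa\setminus\Reg\in I$; as every ground-model club is $I$-positive (its preimage under $\dot c$ contains a club of $\mu$), the regulars below $\kappa$ are stationary. I recommend abandoning the club-relativization of \autoref{Theorem:Reflective changes cofinalities} and rebuilding the argument around \autoref{Lemma:magidor}.
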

\begin{proof}
We aim to show that $S=\kappa\cap\Reg$ is stationary in $\kappa$, and moreover $\kappa\setminus S\in I$. This is equivalent to saying that for every $U$ induced by the generic filter for the reflected forcing, $\RR$, we have that $V^\kappa/U=M\models\cf([d])=[d]$. This is a consequence of \Los theorem, similar to the previous proof. So it is enough to show that for each $\alpha<\kappa$, $M\models j(\alpha)<\cf([d])$.

From \autoref{Lemma:magidor} it will follow that $V[U]\models |j(\alpha)|<\kappa$. By the $\kappa$-c.c.\ of $\RR$ we know that in $V[U]$ the cofinality of $\{x\mid M\models x<[d]\}$ (as a linear order, which is not necessarily well-founded) is $\kappa$. 
\end{proof}
\begin{lemma}[Magidor]\label{Lemma:magidor}
Assume that $\PP$ is a $(\kappa,\mu)$-reflective forcing and $\RR$ its reflected forcing. Let $U\subseteq\RR$ and $G\subseteq\PP$ be $V$-generic filters. Then $V[U]\models|j(\alpha)|\leq(\alpha^\mu)^{V[G]}$.
\end{lemma}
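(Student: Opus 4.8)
The plan is to compose ground model functions $\kappa\to\alpha$ with the generic club added by $\PP$ and to observe that this composition already separates the elements of $j(\alpha)$. Fix $U$ as in the statement and let $G$ be $\PP$-generic over $V[U]$; by the product lemma $U$ and $G$ are then mutually generic over $V$, so $V[U][G]=V[G][U]$, and I argue in this model. Write $c=\dot c^G\colon\mu\to\kappa$ and $C=\rng c$, a club of $\kappa$. Recall that the elements of $j(\alpha)$ in $M$ are exactly the classes $[f]_U$ for $f\in(\alpha^\kappa)^V$: if $\{\xi:h(\xi)<\alpha\}\in U$, then modifying $h$ off that set produces some $f\colon\kappa\to\alpha$ in $V$ with $[f]_U=[h]_U$. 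So it suffices to bound the number of $U$-classes of functions in $(\alpha^\kappa)^V$.

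Consider the map $\Psi\colon(\alpha^\kappa)^V\to(\alpha^\mu)^{V[G]}$, $\Psi(f)=f\circ c$, which lies in $V[G]$. The crucial claim is that the fibres of $\Psi$ refine the $U$-equivalence relation: $\Psi(f)=\Psi(g)$ implies $[f]_U=[g]_U$. Suppose $f$ and $g$ agree on $C$ and put $S=\{\xi<\kappa:f(\xi)\neq g(\xi)\}\in V$, so $S\cap C=\varnothing$. If $S$ were $I$-positive it would be stationary in $V$, since $I$ extends the nonstationary ideal (this uses the weak normality of $I$ recorded after \autoref{Def:reflective forcing}); but then the reflection condition of \autoref{Def:reflective forcing} forces $\dot c^{-1}(\check S)$ to be stationary, in particular nonempty, contradicting $S\cap C=\varnothing$. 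Hence $S\in I$, so $\kappa\setminus S=\{\xi:f(\xi)=g(\xi)\}$ lies in the dual filter $I^*\subseteq U$, and therefore $[f]_U=[g]_U$.

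Because the partition of $(\alpha^\kappa)^V$ into fibres of $\Psi$ refines its partition into $U$-classes, the surjection $f\mapsto[f]_U$ onto $j(\alpha)$ factors through $\Psi$, so $\rng\Psi\subseteq(\alpha^\mu)^{V[G]}$ maps onto $j(\alpha)$; thus $V[U][G]\models|j(\alpha)|\leq|(\alpha^\mu)^{V[G]}|$. To transfer this to $V[U]$ (the form used in \autoref{Corollary:Mahlo}), note that $\RR$ has size at most $2^\mu<\kappa$ also in $V[G]$ — via the embedding $\dot c^{-1}$ of $\RR$ into $(\power(\mu)/\NS_\mu)^{V[G]}=(\power(\mu)/\NS_\mu)^V$ — so $\RR$ is $\kappa$-c.c.\ there. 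Hence if $\kappa$ remains a cardinal in $V[G]$ (for instance when $\forces_\PP\check\kappa$ is a strong limit) it remains one in $V[U][G]$, and then $|(\alpha^\mu)^{V[G]}|<\kappa$ gives $|j(\alpha)|^{V[U][G]}<\kappa$; since $\kappa$ is a cardinal in $V[U][G]$, a set of $V[U]$-cardinality $\geq\kappa$ stays of cardinality $\geq\kappa$ there, so $|j(\alpha)|^{V[U]}<\kappa$ as well.

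I expect the refinement claim to be the heart of the matter: it says precisely that the generic club ``witnesses'' the $U$-equivalence of ground model functions, and its proof leans on two features pulling in the same direction — that $U$ is a filter on the quotient $\power(\kappa)/I$, hence cannot distinguish $I$-equivalent sets, and that $I$ is glued to $\dot c$ by the reflection property and contains $\NS_\kappa^V$. The only other delicate point is keeping straight which model computes which cardinality, which the chain condition of $\RR$ together with the freedom to take $G$ mutually generic with $U$ resolves.
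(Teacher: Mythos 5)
Your argument is essentially the paper's: both proofs bound $|j(\alpha)|$ by counting ground-model functions $\kappa\to\alpha$ according to their restriction to the generic club, and both use condition (3) of the definition of reflectivity to see that agreement on $\rng\dot c$ puts the difference set into $I$ and hence its complement into $U$ (your appeal to ``$I$ extends $\NS_\kappa$'' for the nonstationary case is an implicit assumption the paper's proof relies on as well). The only differences are presentational --- the paper defines the equivalence $F\sim G\iff\{\xi\mid F(\xi)\neq G(\xi)\}\in I$ and characterizes it via the forcing relation, whereas you package the same idea as the map $f\mapsto f\circ c$ whose fibres refine the $U$-classes --- plus some extra (correct) bookkeeping on your part about mutual genericity and which model computes $(\alpha^\mu)$.
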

\begin{proof}
We define an equivalence relation on functions from $\kappa$ to $\alpha$:\[F\sim G\iff \{\alpha<\kappa\mid F(\alpha)\neq G(\alpha)\})\in I.\]
We have that $F\sim G$ if and only if there exists $p\in\PP$ such that $p\forces_\PP\check F\res\rng\dot c=_{\NS_\kappa}\check G\res\rng\dot c$, equivalently $F\nsim G$ if and only if $1_\PP\forces\check F\res\dot c\neq_{\NS_\kappa}\check G\res\dot c$. And we have that there are only $(\alpha^\mu)^{V[G]}$ possible values for this restriction.
\end{proof}

Recall the definition for a $\square(\kappa)$ sequence, where $\kappa$ an infinite cardinal. A sequence $\tup{C_\alpha\midd\alpha<\kappa}$ is a $\square(\kappa)$ sequence if:
\begin{enumerate}
\item For each $\alpha$, $C_\alpha$ is a club in $\alpha$.
\item For each $\alpha<\beta$, if $\alpha\in\acc C_\beta$, then $C_\beta\cap\alpha = C_\alpha$.
\item For each $\alpha$, $C_{\alpha+1}=\{\alpha\}$.
\item There is no club $A\subseteq\kappa$ such that for $\alpha\in\acc A$, $A\cap\alpha=C_\alpha$ (such $A$ is called \textit{a thread}).
\end{enumerate} 
We say that $\square(\kappa)$ holds, if there is a $\square(\kappa)$ sequence.\footnote{For a historical overview of $\square(\kappa)$ and some related results see \cite{RinotSquare}.} 

\begin{theorem}\label{Theorem: omega_1 Mahlo}
Let $\PP$ be a $(\kappa,\mu)$-reflective forcing, and assume that $\forces_\PP\check\kappa$ is strong limit.
We define in $V$ the following stationary $T = \{\alpha < \kappa \mid \exists p\in \PP : p\forces_\PP \cf (\check\alpha) < \check\mu\}$. In $V$ the following statements hold:
\begin{itemize}
\item For every $\eta<\mu$ and $\tup{S_\alpha\midd\alpha<\eta}$ stationary subsets of $T$ have a common reflection point.
\item $\kappa$ is $\mu$-Mahlo.
\item $\square(\kappa)$ fails. 
\end{itemize}
\end{theorem}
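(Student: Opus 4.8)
The plan is to run everything through the generic ultrapower by the reflected forcing $\RR$, as in the proof of \autoref{Theorem:Reflective changes cofinalities}. Fix a $V$-generic $G\subseteq\PP$ with generic cofinal map $c=\dot c^G$ and a $V$-generic $U\subseteq\RR$ with ultrapower $j\colon V\to M=V^\kappa/U$; recall that $[d]=\sup j''\kappa$, that by \autoref{Lemma:continuity} $j$ is continuous at every ordinal in $T$ (and $\crit j\geq\mu$), and that by \autoref{Corollary:Mahlo} together with \autoref{Lemma:magidor} and the hypothesis $\forces_\PP\check\kappa$ strong limit, $M\models[d]$ is a regular cardinal. The organising observation is a \emph{club reformulation}: since $I$ consists exactly of the ground-model sets killed by $\dot c$, for $A\subseteq\kappa$ in $V$ one has $\kappa\setminus A\in I$ if and only if $A$ contains a club of $\kappa$ in $V[G]$. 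From this I read off the \emph{trace lemma}: writing $\mathrm{Tr}(A)=\{\xi<\kappa\mid A\cap\xi\text{ is stationary in }\xi\}$, if $\kappa\setminus A\in I$ then $\kappa\setminus\mathrm{Tr}(A)\in I$, because a $V[G]$-club contained in $A$ has its accumulation set --- again a $V[G]$-club --- contained in $\mathrm{Tr}(A)$.

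For $\mu$-Mahloness I would prove, by induction on $\eta<\mu$, that $\kappa\setminus\mathrm{Mh}_\eta\in I$, where $\mathrm{Mh}_\eta$ is the set of $\eta$-Mahlo cardinals below $\kappa$. Base case: $\kappa\setminus\Reg\in I$ is \autoref{Corollary:Mahlo}, and $\kappa\setminus\mathrm{Inacc}\in I$ follows once we also note $\kappa\setminus\{\xi<\kappa\mid\xi\text{ is a strong limit cardinal}\}\in I$, which holds because in $V[G]$ the cardinal $\kappa$ is strong limit, so the usual club of strong limit cardinals below $\kappa$ computed in $V[G]$ consists of strong limit cardinals of $V$. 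Successor step: $\mathrm{Mh}_{\eta+1}=\mathrm{Inacc}\cap\mathrm{Tr}(\mathrm{Mh}_\eta)$, so apply the trace lemma and that $I$ is an ideal. Limit step: take the union of the previously obtained $I$-small sets, which stays in $I$ since $I$ is $\mu$-closed and $|\eta|<\mu$. As $I\supseteq\NS_\kappa$ and $I$ is proper, each $\mathrm{Mh}_\eta$ is then stationary in $\kappa$.

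For the failure of $\square(\kappa)$: let $\vec C=\tup{C_\alpha\mid\alpha<\kappa}$ be a $\square(\kappa)$-sequence. Then $j(\vec C)$ is coherent in $M$ and its $[d]$-th entry $D$ is a club of $[d]$ in $M$. Put $E=\{\gamma<\kappa\mid j(\gamma)\in D\}$. Using continuity of $j$ at ordinals of countable cofinality (which lie in $T$), one shows $\{\gamma<\kappa\mid j(\gamma)\in\acc D\}$ is unbounded in $\kappa$ in $V[U]$, and for any such $\gamma$, coherence of $j(\vec C)$ together with elementarity of $j$ yields $E\cap\gamma=C_\gamma$; it follows that $E$ is a club threading $\vec C$ in $V[U]$. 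But $\RR$ has size $<\kappa$, hence is $\kappa$-c.c., and a $\kappa$-c.c.\ forcing adds no thread to a $\square(\kappa)$-sequence --- a thread $E$ satisfies $E\cap\gamma=C_\gamma$ on a ground-model club contained in $\acc E$, so $E=\bigcup\{C_\gamma\mid\gamma\text{ in that club}\}\in V$ would already thread $\vec C$ in $V$, a contradiction.

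Finally, the common reflection. Given $\eta<\mu$ and stationary $S_\alpha\subseteq T$ for $\alpha<\eta$, the scheme is: choose the generic $U$ so that $\mathrm{Tr}(S_\alpha)\in U$ for every $\alpha<\eta$; since $[d]=\sup j''\kappa$ and $j$ is continuous on $T\supseteq S_\alpha$, this gives $M\models[d]$ is a common reflection point of $j(\tup{S_\alpha\mid\alpha<\eta})=\tup{j(S_\alpha)\mid\alpha<\eta}$ (recall $[d]<j(\kappa)$), so by elementarity there is a common reflection point below $\kappa$ in $V$. The step I expect to be the genuine obstacle is producing such a $U$, i.e.\ showing $\bigcap_{\alpha<\eta}\mathrm{Tr}(S_\alpha)$ is $I$-positive. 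A stationary subset of $T$ can be $I$-null (this already happens for the Magidor forcing, where $S^\kappa_\omega\in I$), so the trace lemma as stated does not apply and one must instead exploit weak normality of $I$ --- yielding a reflection principle strong enough to cover $I$-null stationary subsets of $T$ and to show their traces are $I$-positive --- together with $\mu$-closure of $I$ to intersect the fewer than $\mu$ many traces, and the fact, established along the way, that $T$ is $I$-co-null. Extracting exactly the right reflection statement from weak normality, in the absence of full normality of $I$, is the delicate heart of the argument.
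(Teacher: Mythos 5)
Your argument for the first bullet has a genuine gap, and it is the one you yourself flag: you propose to \emph{pre-select} the generic $U\subseteq\RR$ so that $\mathrm{Tr}(S_\alpha)\in U$ for all $\alpha<\eta$, which requires proving in $V$ that $\bigcap_{\alpha<\eta}\mathrm{Tr}(S_\alpha)$ is $I$-positive. That statement is essentially the reflection principle you are trying to prove (a common reflection point is exactly a point of $\bigcap_\alpha\mathrm{Tr}(S_\alpha)$), and since a stationary $S_\alpha\subseteq T$ may well lie in $I$, your trace lemma gives nothing; no amount of weak normality of $I$ is known to close this circle directly. The paper's proof avoids the issue entirely by taking an \emph{arbitrary} generic $U$ and arguing post hoc in $V[U]$: if some $M$-club $C$ of $[d]$ misses $j(S_i)$, then $E=\{\alpha<\kappa\mid j(\alpha)\in C\}$ is unbounded (an $\omega$-chain argument using the $\kappa$-c.c.\ of $\RR$ to choose the approximating ordinals in $V$, plus continuity of $j$ at ordinals of countable cofinality) and closed at points of $T$ (by \autoref{Lemma:continuity}), while being disjoint from $S_i\subseteq T$; but $S_i$ remains stationary in $V[U]$ because $\RR$ is $\kappa$-c.c., and a stationary subset of $T$ must meet every unbounded $T$-closed set. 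So $M\models j(S_i)\cap[d]$ is stationary for every $i<\eta$ simultaneously (using $j(\eta)=\eta$), and elementarity finishes. The missing idea is precisely this use of stationarity preservation under the small reflected forcing in place of a ground-model computation of the traces; without it your first bullet does not go through.

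The other two bullets are in better shape. Your derivation of $\mu$-Mahloness by inducting on $\kappa\setminus\mathrm{Mh}_\eta\in I$ via the club reformulation of $I$ and the trace lemma is a legitimate alternative to the paper's route (which instead extracts $\mu$-Mahloness from the common reflection statement restricted to $T\cap\Reg$); your version is arguably more self-contained since it does not depend on the first bullet, though you should note explicitly that $\NS_\kappa\subseteq I$ so that ``$I$-positive'' yields ``stationary''. Your $\square(\kappa)$ argument is essentially the paper's: thread $j(\cC)$ at $[d]$, pull back along $j$ using continuity at points of countable cofinality, and invoke the lemma that a $\kappa$-c.c.\ forcing cannot add a thread.
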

\begin{remark}
In general, the first assertion in \autoref{Theorem: omega_1 Mahlo} implies the third one, since $\square(\kappa)$ implies the existence of pairs of stationary set without common reflection point (see \cite[Lemma~3.2]{RinotSquare}). 
\end{remark}
\begin{proof}
Let $U$ be the generic ultrafilter added by the reflected forcing of $\PP$, and let $M=V^\kappa/U$ and $j$ the ultrapower embedding. Let $\eta < \mu$, $\tup{ S_i \mid i < \eta}$ be a sequence of stationary subsets of $T$. Since $\eta < \mu$, by \autoref{Lemma:continuity}, $j(\eta) = \eta$. 
Let us show that for every $i$, $M \models j(S_i)\cap [d]$ is stationary, where $d(\alpha)=\alpha$ for $\alpha<\kappa$. Assume otherwise, then there is some $i< \eta$ and some  $C\in M$ a club in $M$ such that $M\models C\cap j(S_i)\cap [d] = \varnothing$. 
Let $E = \{\alpha \mid j(\alpha)\in C\}$, we claim that $E$ is a $T$-club in $V[U]$. We start by showing that $E$ is unbounded. 
Let $\alpha_0 < \kappa$, and pick by induction $\xi_n \in C$ and $\alpha_n < \kappa$ such that $\xi_n < j(\alpha_{n+1}) < \xi_{n+1}$. Note that by the $\kappa$-c.c.\ of $\RR$ we may  pick those $\alpha_n$ in $V$. So for $\alpha_\omega = \sup\{\alpha_n\mid n<\omega\}$, $j(\alpha_\omega) \in C$ since $\forall \zeta < j(\alpha_\omega) \exists \xi > \zeta, \xi \in C$, since $j(\alpha_\omega) = \sup\{j(\alpha_n)\mid n\in\omega\}$. 

Now let us show that $E$ is $T$-closed, i.e.\ for every $\alpha \in T$ if $E\cap \alpha$ is unbounded ,then $\alpha \in E$. Again, by \autoref{Lemma:continuity}, if $\alpha \in T$ then $\sup j''\alpha = j(\alpha)$ so if $E \cap \alpha$ is unbounded then $M\models \forall \xi < j(\alpha) \exists \zeta\in C\setminus\xi$ and therefore $j(\alpha) \in C$, as wanted.

But $E \cap S_i = \varnothing$ by the definition of $E$, and this is impossible since $S_i$ in stationary in $V[U]$, because $\RR$ is $\kappa$-c.c. Therefore in $M$ there is some $M$-ordinal below $j(\kappa)$ which reflects all the stationary sets in $j(\tup{S_\alpha\mid\alpha<\eta})$, and by elementarity the sequence has a common reflection point below $\kappa$, as wanted.

Note that the common reflection point is in fact $[d]$. By the proof of \autoref{Corollary:Mahlo}, the set $S = \kappa \cap \Reg \in U$, so by \Los theorem there is a regular common reflection point. Moreover, $\kappa\setminus T\in I$ since every element in the club $\acc(\dot c)$ is of cofinality less than $\mu$. So we have that $S\cap T \in U$ which again by \Los theorem implies that every ${<}\mu$ stationary subsets of $T\cap \Reg$ reflect in some $\nu\in T\cap \Reg$. Now, by induction on $\alpha < \mu$ we get that $\kappa$ is $\mu$-Mahlo.

To see that $\square(\kappa)$ fails, assume towards contradiction that $\cC = \tup{C_\alpha \mid \alpha < \kappa}$ is a $\square(\kappa)$ sequence in $V$. Therefore $M \models ``j(\cC)$ is a $\square(j(\kappa))$-sequence'', so it has a $[d]$-th element. Let $C = j(\cC)([d])$ and as above, define $E$ to be $\{\alpha < \kappa \mid j(\alpha)\in C\}$, so $E$ is a $T$-closed set. For every $\alpha$, if $\cf^V(\alpha) = \omega$ then $\alpha \in T$. Since $\RR$ is $\kappa$-c.c., $(S^\kappa_\omega)^V$ is stationary. Therefore there are unboundedly many $\alpha$ such that $\alpha \in \acc E$ and $\cf^V(\alpha) = \omega$, so $j(\alpha) = \sup j''\alpha$. For every such $\alpha$, $C\cap j(\alpha)=D_\alpha = j(\cC)({j(\alpha)}) = j(C_\alpha)$. For every $\alpha < \beta$ in $\acc E \cap (S^\kappa_\omega)^V$, $C_\alpha$ is an initial segment of $C_\beta$ and therefore $F = \bigcup\{C_\alpha\mid\alpha \in \acc E \cap (S^\kappa_\omega)^V\}$ is a thread added by a $\kappa$-c.c.\ forcing. This is impossible as the next lemma shows.
\end{proof}

\begin{lemma}
Suppose that $\cC=\tup{C_\alpha\mid\alpha<\kappa}$ is a $\square(\kappa)$ sequence. If $\PP$ is $\kappa$-c.c., then $\PP$ does not add a thread to $\cC$.
\end{lemma}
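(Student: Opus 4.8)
The plan is to argue by contradiction, reducing the problem to the standard coherence bookkeeping for $\square$ sequences. Suppose some $p_0\in\PP$ forces that $\dot A$ is a thread through $\cC$: a club in $\kappa$ with $\dot A\cap\check\alpha=\check C_\alpha$ for every $\alpha\in\acc(\dot A)$. The key observation is that any condition $p\leq p_0$ forcing $\check\alpha\in\acc(\dot A)$ for a limit $\alpha<\kappa$ thereby forces $\dot A\cap\check\alpha=\check C_\alpha$; in particular such a $p$ decides $\dot A$ below $\alpha$, and decides it to be a set that already lies in $V$.

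The second step turns this into a club of such $\alpha$ using the chain condition. For $\beta<\kappa$, distinct conditions below $p_0$ deciding $\min(\dot A\setminus\check\beta)$ differently are incompatible, so by $\kappa$-c.c.\ and the regularity of $\kappa$ the value $g(\beta)=\sup\{\gamma+1\mid\exists p\leq p_0,\ p\forces\check\gamma=\min(\dot A\setminus\check\beta)\}$ is $<\kappa$. Let $E$ be the club of limit ordinals $<\kappa$ closed under $g$. For $\alpha\in E$ and any $\beta<\alpha$, every condition below $p_0$ forces $\dot A$ to meet $[\check\beta,\check\alpha)$, so $p_0\forces\check\alpha\in\acc(\dot A)$ and hence $p_0\forces\dot A\cap\check\alpha=\check C_\alpha$. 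Consequently, if $\alpha<\beta$ both lie in $E$, then $p_0\forces\check C_\alpha=(\dot A\cap\check\beta)\cap\check\alpha=\check C_\beta\cap\check\alpha$; as this equates two ground model sets, we get $C_\alpha=C_\beta\cap\alpha$ in $V$.

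The last step is carried out entirely in $V$: put $A^{*}=\bigcup_{\alpha\in E}C_\alpha$. Since $C_\beta\cap\alpha=C_\alpha$ whenever $\alpha<\beta$ are in $E$ and $C_\beta\subseteq C_\alpha$ whenever $\beta\leq\alpha$ are in $E$, one checks $A^{*}\cap\alpha=C_\alpha$ for every $\alpha\in E$. From this $A^{*}$ is a club in $\kappa$: it is unbounded because $E$ is and each $C_\alpha$ is cofinal in $\alpha$, and it is closed because a limit point $\gamma$ of $A^{*}$ lies below some $\alpha\in E$, hence is a limit point of $C_\alpha$, hence is in $C_\alpha\subseteq A^{*}$. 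Finally $A^{*}$ threads $\cC$: given $\alpha\in\acc(A^{*})$, choosing $\beta\in E$ with $\beta>\alpha$ we have $\alpha\in\acc(C_\beta)$, so coherence of $\cC$ gives $C_\beta\cap\alpha=C_\alpha$ and therefore $A^{*}\cap\alpha=(A^{*}\cap\beta)\cap\alpha=C_\beta\cap\alpha=C_\alpha$. Thus $A^{*}\in V$ is a thread, contradicting clause~(4) of the definition of a $\square(\kappa)$ sequence.

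The only step requiring genuine care is the second one — extracting from the $\kappa$-c.c.\ a club $E$ along which $p_0$ decides every proper initial segment of $\dot A$ (and decides it inside $V$). Everything before it is a one-line observation about threads, and everything after it is routine manipulation of coherent sequences in the ground model.
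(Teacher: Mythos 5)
Your proof is correct and follows essentially the same route as the paper: both arguments use the $\kappa$-c.c.\ to produce a ground-model club of ordinals at which the thread's initial segments are decided to be the corresponding $C_\alpha$'s, and then glue these together in $V$ to contradict clause~(4). The only difference is cosmetic --- the paper invokes the standard fact that a $\kappa$-c.c.\ forcing adds no club in $\kappa$ without a ground-model club subset, whereas you inline its proof via the bounding function $g$ and work explicitly below a condition $p_0$, which is if anything slightly more careful.
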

\begin{proof}
Assume otherwise, and let $T$ be the thread added by $\PP$. Namely, $T$ is a club such that for every $\alpha\in\acc T$, $T\cap\alpha\in\cC_\alpha$. Since $\PP$ is $\kappa$-c.c.\ there is some club $T'\subseteq T$ in the ground model. 

For each $\alpha\in\acc T'$, $\forces_\PP\check\alpha\in\acc \dot{T}$ and therefore $\forces_\PP\check{C_\alpha} =\dot T\cap \check\alpha$. In particular, if $\alpha < \beta \in \acc T'$ then $C_\alpha \trianglelefteq C_\beta$ and therefore $E = \bigcup_{\alpha\in \acc T'} C_\alpha$ is a thread for $\cC$ in $V$. 
\end{proof}

\section{Some Questions}

\begin{question}In \autoref{theorem:pcf} we saw that we cannot change the cofinalities of infinitely many cardinals to a constant cofinality without adding $\omega$-sequences, or changing the cofinalities of all the cardinals in the $\pcf$. Is it possible to do just without adding $\omega$-sequences? As remarked after the proof, this seems to require much larger cardinals than just measurables of high Mitchell order. 
\end{question}

\begin{question}Continuing from the previous question, we might ask what happens if we have $\PP$ which changes the cofinality of a regular $\kappa_n$ to $\mu_n$, rather than a constant $\mu$. Will this forcing necessarily add an $\omega$-sequence, or change the cofinalities of the entire $\pcf$?
\end{question}

We have seen that there is no $(\kappa,\mu)$-reflective forcing where $\kappa$ is the least inaccessible, and in particular there is no weakly homogeneous forcing that changes the cofinality of the first inaccessible to $\omega_1$ without adding subsets to $\omega_1$. The following conjectures set the conditions imposed in the main theorems of \autoref{Section:Reflective} are somewhat optimal.

\begin{conjecture} It is consistent, relative to the existence of large cardinals, that there is a weakly homogeneous forcing which changes the cofinality of the first inaccessible to $\omega$ without adding bounded subsets.
\end{conjecture}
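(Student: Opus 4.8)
The plan is to realize such a model in two stages over a large ground model. Start with $V_0\models\ZFC$ in which $\kappa$ is measurable — say inside a well-understood inner model such as $L[U]$, possibly with some extra Mitchell order to leave room for the second stage; in $V_0$ the cardinal $\kappa$ is certainly not the least inaccessible, since the inaccessibles below it are $U$-large, hence stationary. We then force with a preparatory iteration $\QQ$ so that in $V=V_0[\QQ]$ the cardinal $\kappa$ is still inaccessible but every inaccessible of $V_0$ below $\kappa$ has been destroyed, and finally over $V$ we force with a Prikry-type poset $\PP$ adding a cofinal $\omega$-sequence in $\kappa$. Note at once that $\QQ$ \emph{must} add bounded subsets of $\kappa$: a forcing adding no bounded subsets of $\kappa$ preserves the inaccessibility of every cardinal below $\kappa$. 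This is harmless, as only $\PP$ is required to add no bounded subsets; but it is also the source of all the difficulty, since $V$ is then genuinely richer than $V_0$ below $\kappa$.

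For $\QQ$ I would take a reverse Easton iteration of length $\kappa$ which, at each stage $\alpha<\kappa$ where $\alpha$ is still inaccessible, forces with a small poset destroying the inaccessibility of $\alpha$ (for instance a collapse of $\alpha$ onto the supremum of the inaccessibles already treated below it), the local posets chosen — under $\GCH$ in $V_0$ and with Easton supports — so that the whole iteration is $\kappa$-c.c.\ (so $\kappa$ remains regular) and so that the cardinal arithmetic below $\kappa$ still leaves $\kappa$ a strong limit. Then in $V$ the cardinal $\kappa$ is inaccessible and, by construction, the least one. The crucial extra demand is that $\QQ$ preserve whatever structure at $\kappa$ the second stage needs: since $\QQ\in V_0=L[U]$, the model $L[U]$ is still an inner model of $V$ in which $\kappa$ is measurable, and one wants the ultrapower map $j=j_U$ to ``absorb'' $\QQ$ in the sense of the standard lifting arguments, i.e.\ $j(\QQ)\cong\QQ\ast\dot\QQ_{\mathrm{tail}}$ with a sufficiently closed tail. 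This is the same bookkeeping that underlies the construction of precipitous and saturated ideals, here arranged to keep $\kappa$ uncollapsed.

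The second stage $\PP$ would be the Prikry-type forcing attached to the surviving measure. Two of the three required properties are then routine. That $\forces_\PP\cf(\check\kappa)=\check\omega$ is immediate from the genericity of the Prikry sequence. Weak homogeneity follows from the symmetry of Prikry conditions — the automorphisms permuting the measure-one upper parts make any two conditions with the same finite stem conjugate, and a standard symmetrization absorbs distinct stems; one should check that the extra side conditions inherited from $\QQ$ do not spoil this, but this is the sort of thing that can be arranged. The genuinely problematic property is that $\PP$ add \emph{no bounded subsets of $\kappa$ over $V$}, equivalently that $\PP$ have the Prikry property over $V$ rather than merely over $V_0$ or over $L[U]$.

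The main obstacle, then, is the Prikry lemma over $V$. The usual proof needs two things: the direct-extension order $\leq^*$ should be $\kappa$-closed, and the measure should be $\kappa$-complete for the relevant sets. The $\kappa$-closure of $\leq^*$ over $V$ can in fact be recovered from the $\kappa$-c.c.\ of $\QQ$ — by the cover property, any ${<}\kappa$-sequence in $V$ of upper parts (which all lie in $L[U]$) is dominated by a ${<}\kappa$-sized family in $L[U]$, whose intersection, taken inside $L[U]$, is still $U$-large and serves as a lower bound. But the second ingredient fails: the sets whose $U$-measure must be decided in the Prikry lemma are defined from the dense set one is chasing, hence live in $V$, not in $L[U]$, and the $L[U]$-measure $U$ is \emph{not} an ultrafilter on $\power(\kappa)^V$ — it cannot be, since that would make $\kappa$ measurable in $V$ and contradict its being the least inaccessible. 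Reconciling ``$\kappa$ is the least inaccessible of $V$'' with ``the Prikry property survives over $V$'' is precisely the crux, and is why this is a conjecture rather than a theorem. The natural way forward is probably not a clean two-step iteration at all but an integrated forcing whose conditions combine Prikry-style upper parts with side conditions that themselves carry out the destruction of inaccessibility below $\kappa$, for which a single unified Prikry lemma can be proved in $L[U]$-definable terms; this is exactly the flavour of Gitik's construction in \cite{Gitik1986} for changing a cofinality to $\omega_1$ without adding bounded subsets, and I would try to adapt it to the case of target cofinality $\omega$ and to the requirement that $\kappa$ be the least inaccessible, checking along the way that weak homogeneity can be maintained.
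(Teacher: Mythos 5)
This statement is labelled a \emph{conjecture} in the paper: the authors offer no proof of it, and it is presented as an open problem whose purpose is to indicate that the hypotheses of the theorems in the preceding section (target cofinality uncountable) are likely necessary. So there is no ``paper proof'' to compare against, and the only question is whether your proposal actually establishes the result. It does not, and to your credit you say so yourself: the step you flag as ``the crux'' is a genuine, unclosed gap, not a detail to be checked. After the preparatory iteration $\QQ$ destroys every inaccessible below $\kappa$, the cardinal $\kappa$ cannot be measurable in $V$ (a measurable has stationarily many inaccessibles below it), so no $V$-ultrafilter on $\power(\kappa)^V$ is available; the Prikry lemma, which is exactly what guarantees ``no bounded subsets of $\kappa$ are added,'' requires deciding the $U$-measure of sets that are defined from dense subsets of $\PP$ lying in $V$, and the inner-model measure simply does not measure them. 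Your fallback --- an integrated Gitik-style forcing with a unified Prikry lemma --- is a reasonable research direction but is not carried out, and it faces a second obstacle you mention only in passing: weak homogeneity. Gitik-type forcings whose conditions carry side information destroying structure below $\kappa$ are not obviously weakly homogeneous, and the paper's own \autoref{Theorem: omega_1 Mahlo} shows that for uncountable target cofinality the analogous combination is outright impossible when $\kappa$ is the least inaccessible; so any argument here must exploit the target cofinality $\omega$ in an essential way, which your sketch does not yet do.

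One smaller point: your two-stage framing also quietly assumes that $\QQ$ can kill all inaccessibles below $\kappa$ while leaving $\kappa$ a strong limit and leaving enough of the measure's combinatorics intact for the second stage. That is plausible under $\GCH$ with a reverse Easton iteration, but ``leaving enough of the measure intact'' is precisely the content of the Prikry lemma over $V$, so this is the same gap seen from the other side rather than an independent step. In short: the proposal is an honest and sensible attack plan that correctly locates the difficulty, but the conjecture remains exactly as open after your argument as before it.
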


\begin{conjecture} It is consistent, relative to the existence of large cardinals, that there is $(\kappa,\omega_1)$-reflective forcing where $\kappa$ is the first $\omega_1$-Mahlo cardinal.   
\end{conjecture}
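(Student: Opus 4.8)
The plan is to establish the conjecture by exhibiting a model in which $\kappa$ is the first $\omega_1$-Mahlo cardinal while Magidor's forcing for singularizing $\kappa$ to cofinality $\omega_1$ remains available, and is therefore $(\kappa,\omega_1)$-reflective. The natural starting point is a model $V_0$ in which $\kappa$ carries an increasing (in the Mitchell order) sequence of normal measures $\tup{U_\alpha\mid\alpha<\omega_1}$, i.e.\ $o(\kappa)=\omega_1$; this is exactly the hypothesis under which, as discussed in \autoref{Section:Reflective}, the Magidor forcing changes $\cf(\kappa)$ to $\omega_1$ without adding subsets to $\omega_1$ and is $(\kappa,\omega_1)$-reflective. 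The obstruction to be overcome is that such a $\kappa$ is very far from least $\omega_1$-Mahlo: being measurable of high Mitchell order, it is a limit of stationarily many $\omega_1$-Mahlo cardinals. All the work goes into forcing away every $\omega_1$-Mahlo cardinal below $\kappa$ without disturbing the measure sequence at $\kappa$.

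First I would define a reverse Easton iteration $\mathbb{C}$ of length $\kappa$ which, at each stage $\lambda<\kappa$ that is still $\omega_1$-Mahlo (after the lower cardinals have already been demoted at earlier stages), uses a club-shooting forcing to reduce its Mahlo rank below $\omega_1$, for instance by shooting a club disjoint from a stationary set witnessing $\omega_1$-Mahloness. I would arrange each iterand to be $\sigma$-closed — in fact sufficiently closed — so that the whole iteration adds no new subsets to $\omega_1$; this is precisely what preserves condition (1) of \autoref{Def:reflective forcing} for the Magidor forcing as recomputed in the extension. A bookkeeping argument, analogous to the standard constructions pinning down the least inaccessible, together with the fact that the highly closed tails above each stage add no subsets to the cardinals already demoted, ensures that in $V:=V_0^{\mathbb{C}}$ no $\lambda<\kappa$ is $\omega_1$-Mahlo and none is resurrected.

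Next I would verify that $\mathbb{C}$ preserves the large-cardinal structure at $\kappa$. Since $\mathbb{C}$ acts only below $\kappa$ and the tail of $j_\alpha(\mathbb{C})$ above each stage is highly closed, I would lift each ultrapower embedding $j_\alpha\colon V_0\to M_\alpha$ induced by $U_\alpha$ to $V$ by the usual master-condition/transfer method, building the requisite tail-generic from the $\kappa$-closure of $M_\alpha$. Carrying this out simultaneously for all $\alpha<\omega_1$ while maintaining the Mitchell-order relations $U_\alpha\triangleleft U_\beta$ shows that $o(\kappa)=\omega_1$ survives into $V$, so $\kappa$ remains measurable — hence $\omega_1$-Mahlo — and now the least such. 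The Magidor forcing $\PP$ defined from the lifted measure sequence in $V$ is then the desired $(\kappa,\omega_1)$-reflective forcing, establishing the conjecture relative to a measurable of Mitchell order $\omega_1$.

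The main obstacle is precisely the simultaneous preservation in the third step. One must destroy $\omega_1$-Mahloness at every $\lambda<\kappa$, which forces $\mathbb{C}$ to act cofinally and nontrivially below $\kappa$, yet keep the full length-$\omega_1$ Mitchell-order sequence at $\kappa$ intact and keep the iteration from adding subsets to $\omega_1$. Balancing these is delicate: the club-shooting that demotes the $\omega_1$-Mahlo cardinals must be chosen so that it is absorbed by $j_\alpha(\mathbb{C})$, and so that the lifted measures remain genuinely ordered in the Mitchell order rather than degenerating to a shorter sequence. Controlling the interaction between the demotion forcing and the coherence of the measure sequence is where the real content lies, and is plausibly what keeps the statement at the level of a conjecture.
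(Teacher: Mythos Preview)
The paper contains no proof of this statement: it is stated as an open conjecture in the final section and is left entirely unproved. There is therefore nothing in the paper against which to compare your argument.

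Your outline is a natural strategy, and you correctly identify its weak point. A concrete difficulty you do not quite isolate is what happens at stage $\kappa$ of $j_\alpha(\mathbb{C})$ when lifting. Since $M_\alpha$ and $V_0$ agree on $V_\kappa$, after the length-$\kappa$ iteration $M_\alpha[G]$ will see that every $\lambda<\kappa$ has been demoted below $\omega_1$-Mahlo; but for $\alpha>0$ the cardinal $\kappa$ is still measurable in $M_\alpha$ and hence plausibly still $\omega_1$-Mahlo in $M_\alpha[G]$, so the demotion forcing at stage $\kappa$ is triggered there. You must then build, in $V_0[G]$, a generic club through the non-$\omega_1$-Mahlo cardinals of $M_\alpha[G]$ below $\kappa$ while keeping $\kappa$ genuinely $\omega_1$-Mahlo in $V_0[G]$. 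That tension---demoting $\kappa$ inside each $M_\alpha$ without demoting it in $V$---and doing so coherently across the whole Mitchell-ordered sequence so that the lifted measures remain Mitchell-increasing, is exactly where a real argument would be needed. Your sketch does not supply one, and neither does the paper; this is why the statement is posed as a conjecture rather than a theorem.
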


\section{Acknowledgments}
The authors would like to thank the following people: Menachem Magidor for many helpful discussions, and in particular for contributing the proof for \autoref{Lemma:magidor} which is a crucial ingredient in the proof of \autoref{Corollary:Mahlo}; Joel D.\ Hamkins for his helpful observation regarding the proof of \autoref{theorem:decisive}, that deciding statements about the ground model is equivalent to having many generic filters, and therefore the Vop\v{e}nka-Haj\'ek theorem can be applied; Monro Eskew for asking the question leading to \autoref{Corollary:Singulars} which provided a different context for applying the first theorem, and for his help in shaping the formulation of said theorem; and finally to Assaf Rinot for his suggestion that the statement of \autoref{Theorem:proper} be changed to match the content of the proof.

\providecommand{\bysame}{\leavevmode\hbox to3em{\hrulefill}\thinspace}
\providecommand{\MR}{\relax\ifhmode\unskip\space\fi MR }
% \MRhref is called by the amsart/book/proc definition of \MR.
\providecommand{\MRhref}[2]{%
  \href{http://www.ams.org/mathscinet-getitem?mr=#1}{#2}
}
\providecommand{\href}[2]{#2}

\end{document}